\theoremstyle{plain}
\newtheorem{thm}{Theorem}[section]
\newtheorem{prop}[thm]{Proposition}
\newtheorem{lem}[thm]{Lemma}
\newtheorem{cor}[thm]{Corollary}
\theoremstyle{definition}
\newtheorem{defn}[thm]{Definition}
\newtheorem{exa}[thm]{Example}
\newcommand{\FRAC}[2]{\leavevmode\kern.1em\raise.5ex\hbox{\the\scriptfont0 #1}\kern-.1em/\kern-.15em\lower.25ex\hbox{\the\scriptfont0 #2}}
\newcommand{\mf}{\mathcal{F}}
\newcommand{\id}{\mathrm{id}}
\newcommand{\tr}{\mathrm{Tr}}
\newcommand{\btr}{\tr^{\mathrm{Br}}}
\newcommand{\mhom}{\mathcal{H}om}
\DeclareMathOperator{\Spec}{Spec}
\begin{document}
\title
{Having the same wild ramification is preserved\\ by the direct image}
\author{YURI YATAGAWA}
\date{}
\maketitle
\begin{abstract}
Let $S$ be the spectrum of an excellent henselian discrete valuation ring of residue characteristic $p$
and $X$ a separated scheme over $S$ of finite type.
Let $\Lambda$ and $\Lambda'$ be finite fields of characteristics $\ell\neq p$ and $\ell'\neq p$
respectively.
For elements $\mf\in K_{c}(X,\Lambda')$ and $\mf'\in K_{c}(X,\Lambda')$ of the Grothendieck groups
of constructible sheaves of $\Lambda$-modules and $\Lambda'$-modules on $X$
respectively, we introduce the notion that $\mf$ and $\mf'$ have the same wild ramification and prove that this condition is preserved by four of Grothendieck's six operations except the derived tensor product and $R\mhom$.
\end{abstract}
\section*{Introduction}

Let $X$ be a separated scheme over a field $k$ of characteristic $p$ of finite type and 
$\bar{X}$ a proper normal scheme over $k$ containing $X$ as a dense open subscheme.
Let $\mf$ and $\mf'$ be constructible complexes of $\Lambda$-modules on $X$,
where $\Lambda$ is a finite field of characteristic $\ell\neq p$.
Deligne-Illusie \cite{il} have given a sufficient condition for $\mf$ and $\mf'$ to have the same
Euler-Poincar\'{e} characteristic in terms of wild ramification of $\mf$ and $\mf'$.

Let $S$ be the spectrum of an excellent henselian discrete valuation ring of residue characteristic $p>0$ or the spectrum of a field of characteristic $p>0$.
Let $K_{c}(X,\Lambda)$ be the Grothendieck group of constructible sheaves of $\Lambda$-modules
on $X$.
Vidal \cite{vi1} has extended Deligne-Illusie's result to the case where $X$ is a separated scheme over $S$ of finite type
and where $\mf$ and $\mf'$ are elements of $K_{c}(X,\Lambda)$.
More precisely, Vidal has defined a subgroup $K_{c}(X,\Lambda)^{0}_{t}\subset K_{c}(X,\Lambda)$ 
called the Grothendieck group of constructible sheaves of virtual wild ramification 0 and
proved that $Rf_{!}\colon K_{c}(X,\Lambda)\rightarrow K_{c}(S,\Lambda)$ and $Rf_{*}\colon
K_{c}(X,\Lambda)\rightarrow K_{c}(S,\Lambda)$ induce
$Rf_{!}\colon K_{c}(X,\Lambda)^{0}_{t}\rightarrow K_{c}(S,\Lambda)^{0}_{t}$
and $Rf_{*}\colon K_{c}(X,\Lambda)^{0}_{t}\rightarrow K_{c}(S,\Lambda)^{0}_{t}$ respectively
for the structure morphism $f\colon X\rightarrow S$.
This result gives a sufficient condition for $\mf$ and $\mf'$ to have the same Swan conductor (\cite{vi1}).
Vidal \cite{vi2} has further extended this result to the case where 
$S$ is the spectrum of an excellent henselian discrete valuation ring of residue characteristic $p$
(possibly $p=0$)
and where $f$ is an $S$-morphism of separated schemes over $S$ of finite type.
 
In this paper, we give a definition of  
{\it the Grothendieck group $K_{c}(X,\Lambda)_{0}\subset K_{c}(X,\Lambda)$
of constructible sheaves of $\Lambda$-modules on $X$ of wild ramification $0$} 
in Definition \ref{defkczt}
along the notion that two constructible complexes have {\it the same wild ramification} introduced in \cite{sy}
and we prove an analogue of Vidal's result in \cite{vi2} for this group $K_{c}(X,\Lambda)_{0}$ 
and an $S$-morphism $f$ of separated schemes over $S$ of finite type.
More precisely, the group $K_{c}(X,\Lambda)_{0}$ is defined to be the subgroup
of $K_{c}(X,\Lambda)$ consisting of the elements which have the same wild ramification with $0$.
The same wild ramification condition is expressed in terms of the wild ramification of two complexes.
The difference from Deligne-Illusie's condition is that our condition is given in terms of the dimensions
of the fixed parts by elements of inertia groups of $p$-power orders instead of the Brauer traces
of these elements.
The subgroup $K_{c}(X,\Lambda)_{0}$ contains Vidal's subgroup $K_{c}(X,\Lambda)^{0}_{t}$
of $K_{c}(X,\Lambda)$.

Let $\Lambda'$ be another finite field of characteristic $\ell'\neq p$ and
let $\Delta_{c}(X,\Lambda,\Lambda')$ be the subgroup of $K_{c}(X,\Lambda)\times K_{c}(X,\Lambda')$, defined in Definition \ref{defswr},
consisting of the elements $(a,b)\in K_{c}(X,\Lambda)\times K_{c}(X,\Lambda')$ such that $a$ and $b$ have the same wild ramification.
The main theorem of this article is the following:

\begin{thm}
\label{thmmain}
Let $S$ be the spectrum of an excellent strict local henselian discrete valuation ring
of residue characteristic $p$ (possibly $p=0$). 
Let $f\colon X\rightarrow Y$ be an $S$-morphism 
of separated schemes over $S$ of finite type.
Let $f_{!}\times f_{!}\colon K_{c}(X,\Lambda)\times K_{c}(X,\Lambda')\rightarrow 
K_{c}(Y,\Lambda)\times K_{c}(Y,\Lambda')$ be the morphism induced by $Rf_{!}\times Rf_{!}$.
Then $f_{!}\times f_{!}$ induces $f_{!}\times f_{!}\colon \Delta_{c}(X,\Lambda, \Lambda')
\rightarrow \Delta_{c}(Y,\Lambda, \Lambda')$.
\end{thm}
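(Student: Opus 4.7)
The plan is to mirror Vidal's argument for the analogous statement about $K_{c}(X,\Lambda)^{0}_{t}$ in \cite{vi2}, upgrading every step so that the two coefficient systems $\Lambda$ and $\Lambda'$ are controlled simultaneously and so that the invariant being preserved is the dimension of the fixed part under each element of $p$-power order in the local inertia, rather than merely its Brauer trace.

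First I would reduce to the case $Y=S$. Factor $f=\bar{f}\circ j$ with $j$ an open immersion and $\bar{f}$ proper, so that $Rf_{!}=R\bar{f}_{*}\circ j_{!}$. The functor $j_{!}$ preserves the same wild ramification condition on the nose, since extension by zero does not change the stalk or the local inertia action at any point of $X$ and is identically zero elsewhere. For $R\bar{f}_{*}$ with $\bar{f}$ proper, proper base change reduces checking the condition to a fibrewise computation over each geometric point $\bar{y}\to Y$; after passing to the strict henselization at $\bar{y}$, this replaces $Y$ either by the spectrum of a separably closed field or by the spectrum of an excellent strictly local henselian discrete valuation ring, both of which fall within the scope of the theorem.

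Next, by dévissage on the relative dimension (Noether normalization, additivity of both $K_{c}$ and of the same wild ramification condition under distinguished triangles, and de~Jong alterations to absorb residual wild ramification of a representative lisse sheaf) I would reduce to the case where $f$ is the structure morphism of a smooth relative curve $X\to S$. In this curve case the dimension of the fixed part of $\sigma\in I_{S}$ of $p$-power order acting on $H^{i}_{c}(X_{\bar{\eta}},\mf)$ should be expressible as a sum of local contributions indexed by the points of a normal compactification of $X$ over $S$, each local term depending only on the wild ramification data of $\mf$ at the corresponding point; since by hypothesis the pair $(\mf,\mf')$ has the same wild ramification, these local contributions coincide for $\mf$ and $\mf'$ and the global fixed-point dimensions agree.

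The main obstacle is this curve case. Vidal's analogue in \cite{vi2} proceeds via a Lefschetz-type formula for Brauer traces of $p$-power order elements, but our condition is strictly finer: it concerns dimensions of fixed parts, which for elements of order $p^{n}$ are not determined by the Brauer trace alone. The key technical input will be a local expression for $\dim\mf^{\sigma}$ in terms of the wild ramification filtration introduced in \cite{sy}, combined with a nearby-cycles analysis showing that on a smooth relative curve the global fixed dimensions split as a sum of such local contributions, each preserved under the same wild ramification hypothesis. Once this local-global decomposition is established for both coefficient systems, the theorem follows by applying the same wild ramification hypothesis on $X$ point by point.
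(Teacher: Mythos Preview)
Your proposal has a genuine gap at the heart of the argument. You correctly note that Vidal's method in \cite{vi2} runs on Brauer traces, and you assert that this is insufficient here because the invariant at stake, $\dim M^{g}$, is strictly finer. You therefore abandon the Brauer-trace route and propose instead an unspecified ``local expression for $\dim\mf^{\sigma}$'' together with a nearby-cycles decomposition of the fixed-part dimensions of $H^{i}_{c}$ over a relative curve. You give no mechanism for producing such a formula, and none of the cited sources supplies one.

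This misses the actual point of the proof. The paper runs Vidal's Brauer-trace formula essentially unchanged; the bridge back to fixed dimensions is purely elementary. By Lemma~\ref{lembtr}, for $g$ of $p$-power order,
\[
\frac{1}{[E:\mathbf{Q}]}\tr_{E/\mathbf{Q}}\btr_{M}(g)=\frac{1}{p-1}\bigl(p\cdot\dim M^{g}-\dim M^{g^{p}}\bigr),
\]
and by Lemma~\ref{lemdimcn}, equality of these linear combinations for all $g^{p^{n}}$ forces equality of each $\dim M^{g^{p^{n}}}$ individually. So the hypothesis $(a,b)\in\Delta_{coh}(X,\Lambda,\Lambda')$ already gives matching normalized Brauer traces on $E_{X/S}$; Vidal's formula (Proposition~\ref{propvitr}), combined with the facts that the weights $\tr_{R\Gamma_{c}(V_{\bar{\tau}},E_{\lambda})}(h')$ are integers independent of $\ell$ and that only $h'$ with $p_{H}(h')$ in the image of $E_{X/S}$ contribute (Proposition~\ref{propvip}), then yields matching normalized Brauer traces for $f_{!}a$ and $f_{!}b$; and Lemmas~\ref{lembtr}--\ref{lemdimcn} convert this back to matching fixed dimensions. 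No new local formula for $\dim\mf^{\sigma}$ is needed.

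Secondarily, your reduction strategy also diverges from the paper's. The paper reduces to $f$ of relative dimension $\le 1$ over a general normal connected affine $Y$ by factoring $A=B[t_{1},\dots,t_{d}]/I$ one variable at a time; it does not reduce to $Y=S$, and it uses neither alterations nor nearby cycles. Your proper-base-change step (``pass to the strict henselization at $\bar{y}\in Y$'') does not obviously control the inertia at boundary points of a compactification $\bar{Y}\supset Y$, which is where the condition defining $\Delta_{coh}(Y,\Lambda,\Lambda')$ actually lives.
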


The analogue of Vidal's result in \cite{vi2} for $K_{c}(X,\Lambda)_{0}$ 
is proved in Corollary \ref{corkzt} (iii) as a corollary of this theorem.
Theorem \ref{thmmain} also leads in Corollary \ref{corkzt} to
the compatibility of $K_{c}(X,\Lambda)_{0}$ with four of Grothendieck's six operations except 
the derived tensor product and $R\mhom$  
as well as Vidal's result in \cite{vi2}.
A partial result of Theorem \ref{thmmain}, which is under the assumption that $\dim Y\le 2$,
and a similar result for two complexes having the same Artin conductor when restricted to a curve 
have been obtained by Kato \cite{ka}.

We describe the construction of this paper.
In Section \ref{sbrtr}, we recall the Brauer trace.
We define the same wild ramification condition in Definition \ref{defswr} and 
give the definition of $K_{c}(X,\Lambda)_{0}$ in Definition \ref{defkczt}.
The proof of Theorem \ref{thmmain} is given in Section \ref{spfmain}.
In Section \ref{scor}, we give two corollaries of Theorem \ref{thmmain}
on the compatibility with Grothendieck's six operations.

The author would like to thank Professor Takeshi Saito for discussions.
This work is supported by JSPS KAKENHI Grant Number 15J03851.

\section{Brauer Trace}
\label{sbrtr}

We briefly recall the definition of the Brauer trace.
Let $G$ be a profinite group and $\Lambda$ a finite field of characteristic $\ell$.
Let $W(\Lambda)$ be the Witt ring of $\Lambda$.
The subgroup of $G$ consisting of $\ell$-regular elements is denoted by $G_{reg}$.
Let $K_{\cdot}(\Lambda[G])$ be the Grothendieck group of finite dimensional $\Lambda$-vector spaces 
with continuous $G$-actions.
Let $M$ be an element of $K_{\cdot}(\Lambda[G])$.
The Brauer trace $\btr_{M} \colon G_{reg}\rightarrow W(\Lambda)$ is a central function
of $G_{reg}$, and if $M$ is the class of a finite dimensional $\Lambda$-vector space with 
continuous $G$-action, then
it is given by $\btr_{M}(g)=\sum[\lambda]$ for $g\in G_{reg}$.
Here $\lambda$ runs through every eigenvalue of the action of $g$ on $M$
and $[\lambda]$ denotes the unique lift of $\lambda$ such that $[\lambda]$ is a root of unity
of order prime to $\ell$.
We note that the Brauer trace is an additive function, and is multiplicative with respect to the elements of $K_{\cdot}(\Lambda[G])$.

\begin{lem}[cf.\ {\cite[Lemma 4.1]{sy}}]
\label{lembtr}
Let $M$ be an element of $K_{\cdot}(\Lambda[G])$ and $g$ an element of $G$
of $p$-power order for a prime number $p$ different from $\ell$ as an automorphism of $M$.
For every subfield $E$ of the fractional field of $W(\Lambda)$ of finite degree over $\mathbf{Q}$
containing $\btr_{M}(g)$, we have
\begin{equation}
\label{eqbtre}
\frac{1}{[E:\mathbf{Q}]}\tr_{E/\mathbf{Q}}\btr_{M}(g)=\frac{1}{p-1}(p\cdot \dim M^{g}-\dim M^{g^{p}}). 
\end{equation}
\end{lem}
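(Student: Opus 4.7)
The plan is to diagonalize $g$ after extending scalars to $\bar\Lambda$ and to reduce both sides of (\ref{eqbtre}) to sums indexed by the multiplicities of the eigenvalues, which can then be evaluated separately.

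Since both sides are additive in $M$---for the right-hand side this uses that $\Lambda[\langle g\rangle]$ is semisimple (because $g$ has $p$-power order with $p\neq\ell$), so taking fixed points is exact---I would first reduce to the case where $M$ is a genuine $\Lambda[G]$-module. Let $p^n$ be the order of $g$ on $M$. Then $x^{p^n}-1$ is separable over $\Lambda$, so $g$ acts diagonalizably on $M\otimes_\Lambda\bar\Lambda=\bigoplus_\lambda M_\lambda$, with $\lambda$ running over $p^n$-th roots of unity in $\bar\Lambda$. Let $a_k$ denote the total dimension of those $M_\lambda$ whose eigenvalue has exact order $p^k$. Then $\dim M^g=a_0$ and $\dim M^{g^p}=a_0+a_1$, so the right-hand side of (\ref{eqbtre}) simplifies at once to $a_0-a_1/(p-1)$.

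For the left-hand side I would use the standard fact that for any $\alpha\in\bar{\mathbf{Q}}$ and any number field $E\supset\mathbf{Q}(\alpha)$, the normalized trace $\tr_{E/\mathbf{Q}}(\alpha)/[E:\mathbf{Q}]$ is intrinsic to $\alpha$ and equals the mean of its Galois conjugates over $\mathbf{Q}$. Writing $\btr_M(g)=\sum_\lambda(\dim M_\lambda)[\lambda]$ and invoking additivity of the normalized trace, the computation reduces to the mean of Galois conjugates of a single primitive $p^k$-th root of unity: this is $1$ for $k=0$; it is $-1/(p-1)$ for $k=1$ (the sum of primitive $p$-th roots of unity is $-1$ and the cyclotomic degree is $p-1$); and it vanishes for $k\geq 2$ (the sum of primitive $p^k$-th roots of unity is zero, as it equals the sum of all $p^k$-th roots minus the sum of all $p^{k-1}$-th roots). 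Adding these contributions yields $a_0-a_1/(p-1)$, matching the right-hand side.

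The only point requiring care is the identification of $\btr_M(g)\in W(\Lambda)$ with an element of $\mathbf{Z}[\zeta_{p^n}]\subset\bar{\mathbf{Q}}$, so that trace computations performed inside $\Frac(W(\Lambda))$ agree with the cyclotomic computations above. This is harmless because $\tr_{F/\mathbf{Q}}$ on a finite extension $F/\mathbf{Q}$ depends only on the abstract pair (field, element) and not on any chosen embedding of $F$; once this is noted, the argument is a direct cyclotomic calculation.
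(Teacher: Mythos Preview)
Your argument is correct. The paper's own proof is much shorter because it outsources the module case to \cite[Lemma 4.1]{sy} and then only addresses the passage from genuine modules to virtual classes: it writes $M=\sum c_iM_i$, enlarges $E$ to a finite extension $E'$ containing every $\btr_{M_i}(g)$, applies the cited lemma to each $M_i$ over $E'$, and finally uses that the normalized trace $\frac{1}{[E:\mathbf{Q}]}\tr_{E/\mathbf{Q}}$ of $\btr_M(g)$ is the same whether computed in $E$ or in $E'$. Your proof takes the same route in spirit---the key device in both is the field-independence of the normalized trace, which you state as ``the mean of the Galois conjugates is intrinsic to $\alpha$''---but you supply the underlying cyclotomic computation yourself instead of citing it: diagonalize $g$, group eigenvalues by exact order $p^k$, and evaluate the mean of the conjugates of a primitive $p^k$-th root of unity as $1$, $-1/(p-1)$, or $0$ according as $k=0$, $k=1$, or $k\ge 2$. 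What you gain is a self-contained proof; what the paper gains is brevity and a clean separation between the representation-theoretic input (the cited lemma) and the bookkeeping needed to pass to $K_\cdot(\Lambda[G])$.
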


\begin{proof}
If $M$ is the class of a finite dimensional $\Lambda$-vector space
with continuous $G$-action,
the assertion follows from \cite[Lemma 4.1]{sy}.

Suppose that $M$ is the linear combination over $\mathbf{Z}$ of finitely many classes $\{M_{i}\}_{i}$ of finite dimensional 
$\Lambda$-vector spaces with continuous $G$-actions.
If $E$ contains $\btr_{M_{i}}(g)$ for every $i$, then the assertion follows
since $\tr_{E/\mathbf{Q}}$ is an additive function on $E$
and the Brauer trace is additive with respect to the elements of $K_{\cdot}(\Lambda[G])$.
If $E$ does not contain $\btr_{M_{i}}(g)$ for some $i$, take a finite extension $E'$ of $E$
containing $\btr_{M_{i}}(g)$ for every $i$.
Then we have
\begin{equation}
\label{eqbtrep}
\frac{1}{[E':\mathbf{Q}]}\tr_{E'/\mathbf{Q}}\btr_{M}(g)=\frac{1}{p-1}(p\cdot \dim M^{g}-\dim M^{g^{p}}) 
\end{equation}
by the case where $E$ contains $\btr_{M_{i}}(g)$ for every $i$.
Since $E$ contains $\btr_{M}(g)$, the left hand side of (\ref{eqbtrep}) is equal to that of (\ref{eqbtre}).
Hence the assertion follows.
\end{proof}

\begin{lem}
\label{lemdimcn}
Let $M$ and $N$ be elements of $K_{\cdot}(\Lambda[G])$ and $K_{\cdot}(\Lambda'[G])$ respectively.
Let $p$ be a prime
number different from $\ell$ and $\ell'$.
Let $g$ be an element of $G$ of $p$-power order.
Then the following are equivalent:
\begin{enumerate}
\item $p\cdot \dim M^{g^{p^{n}}} -\dim M^{g^{p^{n+1}}}=p\cdot \dim N^{g^{p^{n}}} -\dim N^{g^{p^{n+1}}}$ 
for every $n\in \mathbf{Z}_{\ge 0}$.
\item $\dim M^{g^{n}}=\dim N^{g^{n}}$ for every $n\in \mathbf{Z}_{\ge 0}$.
\end{enumerate}
\end{lem}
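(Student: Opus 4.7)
The implication (ii) $\Rightarrow$ (i) is immediate: both sides of (i) are integer combinations of dimensions of the form $\dim M^{g^{m}}$ and $\dim N^{g^{m}}$, so (ii) makes (i) hold term by term. The work is entirely in the converse, and my plan is to reduce it to an elementary one-step linear recurrence.

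First, I would use the hypothesis that $g$ has $p$-power order to reduce (ii) to the case of exponents that are themselves powers of $p$. If $g$ has order $p^{N}$, then for any $n\in\mathbf{Z}_{\ge 0}$ the cyclic subgroup $\langle g^{n}\rangle$ coincides with $\langle g^{p^{k}}\rangle$ for $k=\min(N,v_{p}(n))$, where $v_{p}$ denotes the $p$-adic valuation (with the convention $v_{p}(0)=\infty$). Consequently $M^{g^{n}}=M^{g^{p^{k}}}$ and similarly for $N$, so (ii) is equivalent to the assertion that $\dim M^{g^{p^{k}}}=\dim N^{g^{p^{k}}}$ for every $k\in\mathbf{Z}_{\ge 0}$. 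Next, specializing (i) to any $n\ge N$ (where $g^{p^{n}}=g^{p^{n+1}}=1$) collapses it to $(p-1)\dim M=(p-1)\dim N$, which gives $\dim M=\dim N$ because $p\ge 2$.

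With these preliminaries in hand, I would set $c_{k}=\dim M^{g^{p^{k}}}-\dim N^{g^{p^{k}}}$. Condition (i) translates into the recurrence $c_{k+1}=p\,c_{k}$ for every $k\ge 0$, while the previous step shows $c_{k}=0$ for every $k\ge N$. Iterating the recurrence upward from an arbitrary $0\le k\le N$ yields $0=c_{N}=p^{N-k}c_{k}$, forcing $c_{k}=0$ for all such $k$, hence for all $k\ge 0$. Combined with the reduction in the first step, this is precisely (ii). The argument is entirely elementary; the only point requiring a moment's care is recognizing that $M^{g^{n}}$ depends solely on the subgroup $\langle g^{n}\rangle$, which is what lets the hypothesis on the $p$-power order of $g$ do its work in the reduction step.
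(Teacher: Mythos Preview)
Your proof is correct and follows essentially the same route as the paper's: rewrite (i) as the recurrence $c_{k+1}=p\,c_{k}$ for $c_{k}=\dim M^{g^{p^{k}}}-\dim N^{g^{p^{k}}}$, use a large enough exponent to force $\dim M=\dim N$, and propagate $c_{k}=0$ downward. The only difference is that you make the reduction from arbitrary exponents $g^{n}$ to $p$-power exponents $g^{p^{k}}$ explicit, whereas the paper leaves this step implicit.
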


\begin{proof}
The condition (ii) obviously implies the condition (i).

Suppose that the condition (i) holds.
Then we have 
\begin{equation}
\label{eqmn}
p\cdot(\dim M^{g^{p^{n}}}-\dim N^{g^{p^{n}}})=\dim M^{g^{p^{n+1}}}-\dim N^{g^{p^{n+1}}}
\end{equation}
for every $n\in \mathbf{Z}_{\ge 0}$.
Let $s$ be an integer $\ge 0$ such that $g^{p^{s}}=\id_{M}$ and $g^{p^{s}}=\id_{N}$.
If $n\ge s$, we have $\dim M^{g^{p^{n}}}=\dim M^{g^{p^{n+1}}}=\dim M$ and 
$\dim N^{g^{p^{n}}}=\dim N^{g^{p^{n+1}}}=\dim N$.
Hence, by (i), we have
$(p-1)\cdot\dim M=(p-1)\cdot\dim N$.
Since $p-1\neq 0$ in $\mathbf{Q}$, we have $\dim M=\dim N$ and 
hence $\dim M^{g^{p^{n}}}=\dim N^{g^{p^{n}}}$ for every integer $n\ge s$.

If $0\le n <s$, then we have
$p^{s-n}\cdot(\dim M^{g^{p^{n}}}-\dim N^{g^{p^{n}}})=\dim M-\dim N$ by (\ref{eqmn}).
Since $\dim M=\dim N$ and $p^{s-n}\neq 0$ in $\mathbf{Q}$, the assertion follows.
\end{proof}

\section{Same wild ramification}
\label{sswr}
Let $S$ be an excellent trait of residue characteristic $p$, 
namely $S$ is the spectrum of an excellent henselian discrete valuation ring
of residue characteristic $p$.
Note that $p=0$ is admitted.
For example, a complete discrete valuation ring is an excellent henselian discrete valuation ring.
The generic point of $S$ is denoted by $\eta$ and the closed point of $S$ is denoted by $s$.

\begin{defn}[{\cite[Subsection 2.1]{vi1}, \cite[Section 1]{vi2}}]
\label{defezs}
Let $Z$ be a normal connected scheme over $S$ of finite type
and $\bar{v}_{0}$ a geometric generic point of $Z$.
\begin{enumerate}
\item Let $\bar{Z}$ be a normal compactification of $Z\rightarrow S$ containing $Z$ as a dense open subscheme,
namely $\bar{Z}$ is a proper normal scheme over $S$ containing $Z$ as a dense open subscheme.
Let $\bar{z}$ be a geometric point of $\bar{Z}$
and $\bar{v}$ a geometric point of $\bar{Z}_{(\bar{z})}\times_{\bar{Z}}Z$ lying above $\bar{v}_{0}$.
We define a subset $E_{\bar{z},\bar{v}}$ of $\pi_{1}(Z,\bar{v}_{0})$ to be the union of the image of 
$p$-Sylow subgroups of
$\pi_{1}(\bar{Z}_{(\bar{z})}\times_{\bar{Z}}Z,\bar{v})$,
where $\bar{Z}_{(\bar{z})}$ denotes the strict localization of $\bar{Z}$ at $\bar{z}$.
\item We define a subset $E_{Z/S,\bar{z}}$ of $\pi_{1}(Z,\bar{v}_{0})$ to be the union of conjugates of $E_{\bar{z},\bar{v}}$.
\item We define a subset $E_{Z/S,\bar{Z}}$ of $\pi_{1}(Z,\bar{v}_{0})$ to be the union of $E_{Z/S,
\bar{z}}$
for every geometric point $\bar{z}$ of $\bar{Z}$.
\item We define a subset $E_{Z/S}$ of $\pi_{1}(Z,\bar{v}_{0})$ 
to be the intersection of $E_{Z/S,\bar{Z}}$
for every normal compactification $\bar{Z}$ of $Z\rightarrow S$ containing $Z$ as a dense open subscheme.
\item Let $\tau$ be the generic point of $Z$.
We define a subset $E_{\tau/S}(Z)$ of $E_{Z/S}$ to be the intersection of the image of
$E_{U/S}\rightarrow E_{Z/S}$ for every dense open subscheme $U$ of $Z$. 
\end{enumerate}
\end{defn}

The definition of $E_{Z/S,\bar{v}}$ is independent of the choice of the geometric
point $\bar{v}$ of $\bar{Z}_{(\bar{z})}\times_{\bar{Z}}Z$ lying above $\bar{v}_{0}$.
In fact, for two geometric points $\bar{v}_{1}$ and 
$\bar{v}_{2}$ of $\bar{Z}_{(\bar{z})}\times_{\bar{Z}}Z$
lying above $\bar{v}_{0}$,
the subsets $E_{\bar{z},\bar{v}_{1}}$ and $E_{\bar{z},\bar{v}_{2}}$ of $\pi_{1}(Z,\bar{v}_{0})$
are conjugate.
We note that $E_{Z/S,\bar{z}}$, $E_{Z/S,\bar{Z}}$, and $E_{Z/S}$
are stable under conjugate.

Let $Q$ be a finite quotient of $\pi_{1}(Z,\bar{v}_{0})$.
Let $E_{Z/S,\bar{z}}(Q)$, $E_{Z/S,\bar{Z}}(Q)$, $E_{Z/S}(Q)$, and $E_{\tau/S}(Q)$ be the images of
$E_{Z/S,\bar{z}}$, $E_{Z/S,\bar{Z}}$, $E_{Z/S}$, and $E_{\tau/S}(Z)$ in $Q$ respectively.
Since the category of normal compactifications of $Z\rightarrow S$ containing $Z$ as a dense open
subscheme is cofiltered
and $Q$ is a finite group,
there exists a normal compactification $\bar{Z}$ of $Z\rightarrow S$ 
containing $Z$ as a dense open subscheme such that
$E_{Z/S}(Q)=E_{Z/S,\bar{Z}}(Q)$.
Since the category of dense open subschemes of $Z$ is cofiltered and $Q$ is a finite group,
there exists a dense open subscheme $U$ of $Z$ such that $E_{\tau/S}(Q)$ is the image of
$E_{U/S}$ in $Q$. 
\vspace{0.2cm}

Let $X$ be a separated scheme over $S$ of finite type.
Let $\Lambda$ and $\Lambda'$ be finite fields of characteristic $\ell\neq p$ and $\ell'\neq p$ respectively.
Let $K_{c}(X,\Lambda)$ be the Grothendieck group of constructible sheaves
of $\Lambda$-modules on $X$ and 
$K_{coh}(X,\Lambda)$ the subgroup of $K_{c}(X,\Lambda)$
generated by the classes of locally constant constructible sheaves of 
$\Lambda$-modules on $X$.
If $X$ is normal connected and if $\bar{t}_{0}$ is a geometric generic point of $X$, 
then the Grothendieck group $K_{coh}(X,\Lambda)$ is equal to 
the Grothendieck group $K_{\cdot}(\Lambda[\pi_{1}(X,\bar{t}_{0})])$
of finite dimensional $\Lambda$-vector spaces with continuous $\pi_{1}(X,\bar{t}_{0})$-actions.

\begin{lem}[{\cite[Lemme 2.1.1]{vi1}}] 
\label{lemvifet}
Let $f\colon X\rightarrow Y$ be an $S$-morphism of normal connected schemes over $S$
of finite type.
Let $\bar{t}_{0}$ be a geometric generic point of $X$ and
let $\bar{u}_{0}$ be a geometric generic point of $Y$. 
Let $\phi_{f}\colon \pi_{1}(X,\bar{t}_{0})\rightarrow \pi_{1}(Y,f(\bar{t}_{0})) \rightarrow \pi_{1}(Y,\bar{u}_{0})$
be the composition of the morphism $\pi_{1}(X,\bar{t}_{0})\rightarrow \pi_{1}(Y,f(\bar{t}_{0}))$
induced by $f$ and an isomorphism $\pi_{1}(Y,f(\bar{t}_{0})) \rightarrow \pi_{1}(Y,\bar{u}_{0})$
of fundamental groups.
Let $E_{X/S}$ and $E_{Y/S}$ be the subsets of $\pi_{1}(X,\bar{t}_{0})$ and $\pi_{1}(Y,\bar{u}_{0})$
respectively defined in Definition \ref{defezs} (iv).

\begin{enumerate}
\item $\phi_{f} (E_{X/S})\subset E_{Y/S}$.
\item If $f$ is finite \'{e}tale and if $\bar{u}_{0}=f(\bar{t}_{0})$, then we have $E_{X/S}=E_{Y/S}\cap \pi_{1}(X,\bar{t}_{0})$.
\end{enumerate}
\end{lem}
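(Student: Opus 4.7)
The plan is to reduce both assertions to a statement about how the induced morphism on fundamental groups behaves on pro-$p$ subgroups arising from strict localizations of normal compactifications. Two general facts will be used repeatedly: in a profinite group every pro-$p$ subgroup is contained in some $p$-Sylow, and the collection of normal compactifications of a separated $S$-scheme of finite type is cofiltered (this last point justifies forming intersections and taking refinements below).

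For (i), I would start with an arbitrary normal compactification $\bar{Y}$ of $Y\rightarrow S$. Pick any normal compactification $\bar{X}^{0}$ of $X\rightarrow S$, take the closure of the graph of $f$ in $\bar{X}^{0}\times_{S}\bar{Y}$, and normalize it to obtain a normal compactification $\bar{X}$ of $X$ together with an extension $\bar{f}\colon \bar{X}\rightarrow \bar{Y}$ of $f$. For a geometric point $\bar{x}$ of $\bar{X}$ with image $\bar{z}=\bar{f}(\bar{x})$, the map $\bar{f}$ induces $\bar{X}_{(\bar{x})}\times_{\bar{X}}X \rightarrow \bar{Y}_{(\bar{z})}\times_{\bar{Y}}Y$, and the image of a $p$-Sylow of the fundamental group of the source is a pro-$p$ subgroup of that of the target, hence lies in some $p$-Sylow of the target. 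Projecting down to $\pi_{1}(Y,\bar{u}_{0})$ gives $\phi_{f}(E_{\bar{x},\bar{v}})\subset E_{\bar{z},\bar{f}(\bar{v})}$ up to conjugation; taking unions over $\bar{x}$ and then the intersection over $\bar{Y}$ (with a matching $\bar{X}$ chosen for each) yields (i).

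For (ii), the inclusion $E_{X/S}\subset E_{Y/S}\cap\pi_{1}(X,\bar{t}_{0})$ is immediate from (i) together with the fact that, for $f$ finite \'etale and $\bar{u}_{0}=f(\bar{t}_{0})$, $\phi_{f}$ identifies $\pi_{1}(X,\bar{t}_{0})$ with an open subgroup of $\pi_{1}(Y,\bar{u}_{0})$. For the reverse inclusion, take $\gamma\in E_{Y/S}\cap\pi_{1}(X,\bar{t}_{0})$ and a normal compactification $\bar{X}$ of $X$; refine $\bar{X}$ to some $\bar{X}'$ equal to the normalization in the function field of $X$ of a normal compactification $\bar{Y}$ of $Y$, so that $\bar{X}'\rightarrow \bar{Y}$ becomes finite. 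Then the strict-henselization decomposition
$$\bar{X}'\times_{\bar{Y}}\bar{Y}_{(\bar{z})}=\bigsqcup_{i}\bar{X}'_{(\bar{x}_{i})},$$
combined with the finite \'etaleness of $f$, shows that $\sqcup_{i}\bar{X}'_{(\bar{x}_{i})}\times_{\bar{X}'}X\rightarrow \bar{Y}_{(\bar{z})}\times_{\bar{Y}}Y$ is a finite \'etale cover, so each $\pi_{1}(\bar{X}'_{(\bar{x}_{i})}\times_{\bar{X}'}X)$ is an open subgroup of finite index in $\pi_{1}(\bar{Y}_{(\bar{z})}\times_{\bar{Y}}Y)$. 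Choose $\bar{Y}$, a geometric point $\bar{z}$, and a $p$-Sylow $P$ of $\pi_{1}(\bar{Y}_{(\bar{z})}\times_{\bar{Y}}Y)$ (together with an appropriate conjugate) such that $\gamma\in P$; since $\gamma\in\pi_{1}(X,\bar{t}_{0})$, it lies in $P\cap \pi_{1}(\bar{X}'_{(\bar{x}_{i})}\times_{\bar{X}'}X)$ for some $i$, which is open in $P$, hence pro-$p$, hence contained in a $p$-Sylow of $\pi_{1}(\bar{X}'_{(\bar{x}_{i})}\times_{\bar{X}'}X)$. Therefore $\gamma\in E_{X/S,\bar{X}'}\subset E_{X/S,\bar{X}}$.

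The main obstacle is the cofinality/bookkeeping step in (ii): one must verify that, as $\bar{X}$ ranges over normal compactifications of $X$, those arising as the normalization of some compactification $\bar{Y}$ of $Y$ in the function field of $X$ are cofinal, so that proving $\gamma\in E_{X/S,\bar{X}'}$ for all such $\bar{X}'$ pins down $\gamma\in E_{X/S}$. The simultaneous tracking of base points and of conjugates (since each $E_{\bar{z},\bar{v}}$ is only canonical up to conjugation) is where the argument becomes technically delicate, even though the underlying geometric input is just Nagata compactification and the behavior of Sylow subgroups under continuous maps.
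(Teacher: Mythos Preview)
The paper does not give its own proof of this lemma; it is quoted verbatim from Vidal \cite[Lemme 2.1.1]{vi1} and used as input. Your outline is the standard argument and matches Vidal's proof: part (i) via extending $f$ over a compactification (graph closure plus normalization) and using that the image of a pro-$p$ subgroup is pro-$p$; part (ii) by reducing to a finite morphism of compactifications and using the strict-localization decomposition of a finite cover.

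The obstacle you single out in (ii) is genuine and is exactly the point requiring care. The cofinality you need does hold, and here is how to see it. Pass to the Galois closure $\tilde{X}\to X\to Y$ with group $G$ and $X=\tilde{X}/H$. Given an arbitrary normal compactification $\bar{X}$ of $X$, let $\bar{\tilde{X}}_{0}$ be the normalization of $\bar{X}$ in $K(\tilde{X})$, and refine it to a $G$-equivariant normal compactification $\bar{\tilde{X}}$ of $\tilde{X}$ (take the normalized closure of the diagonal in $\prod_{g\in G}\bar{\tilde{X}}_{0}^{\,g}$). Put $\bar{Y}=\bar{\tilde{X}}/G$ and $\bar{X}'=\bar{\tilde{X}}/H$; then $\bar{X}'$ is the normalization of $\bar{Y}$ in $K(X)$, and the $H$-invariant composite $\bar{\tilde{X}}\to\bar{\tilde{X}}_{0}\to\bar{X}$ factors through $\bar{X}'\to\bar{X}$, so $\bar{X}'$ dominates $\bar{X}$. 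This gives the cofinality and completes your argument. Your remaining worry about base points and conjugates is harmless: all the sets $E_{Z/S,\bar{z}}$, $E_{Z/S,\bar{Z}}$, $E_{Z/S}$ are conjugation-stable by definition, so once compatible base points are fixed the bookkeeping is routine.
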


We introduce the notion of same wild ramification.

\begin{defn}
\label{defswrcoh}
Assume that $X$ is normal connected.
Let $\bar{t}_{0}$ be a geometric generic point of $X$ and 
let $E_{X/S}$ be the subset of $\pi_{1}(X,\bar{t}_{0})$
defined in Definition \ref{defezs} (iv). 
\begin{enumerate}
\item We put $A_{X/S}=\prod_{g\in E_{X/S}}\mathbf{Z}$.
We define the morphism $\varphi_{\Lambda}\colon K_{coh}(X,\Lambda)\rightarrow A_{X/S}$ of modules
by $\varphi_{\Lambda}(a)=(\dim a^{g})_{g}$.
\item We define a subgroup $\Delta_{coh}(X,\Lambda,\Lambda')$ of $K_{coh}(X,\Lambda)\times
K_{coh}(X,\Lambda')$ to be the 
kernel of the morphism 
\begin{equation}
\varphi_{\Lambda}-\varphi_{\Lambda'}
\colon K_{coh}(X,\Lambda)\times K_{coh}(X,\Lambda')\rightarrow A_{X/S}
\; ;\; (a,b)\mapsto \varphi_{\Lambda}(a)-\varphi_{\Lambda'}(b). \notag
\end{equation}
\end{enumerate}
\end{defn}

Let $\bar{t}_{0}$ and $\bar{t}_{0}'$ be geometric generic points of $X$.
Since the fundamental groups $\pi_{1}(X,\bar{t}_{0})$ and $\pi_{1}(X,\bar{t}_{0}')$ are isomorphic
and since $E_{X/S}\subset \pi_{1}(X,\bar{t}_{0})$ and $E_{X/S}\subset \pi_{1}(X,\bar{t}_{0}')$
are isomorphic by the isomorphism of $\pi_{1}(X,\bar{t}_{0})$ and $\pi_{1}(X,\bar{t}_{0}')$,
the definition of $\Delta_{coh}(X,\Lambda,\Lambda')$ 
is independent of the choice of the geometric generic point $\bar{t}_{0}$ of $X$.
By this reason, we sometimes omit the base point $\bar{t}_{0}$ of $\pi_{1}(X,\bar{t}_{0})$ 
when we consider $\Delta_{coh}(X,\Lambda,\Lambda')$.

\begin{defn}
\label{defswr}
Let the notation be as before Lemma \ref{lemvifet}.
\begin{enumerate}
\item We define a subgroup $\Delta_{c}(X,\Lambda,\Lambda')$ of $K_{c}(X,\Lambda)\times K_{c}(X,
\Lambda')$ to be the subgroup of $K_{c}(X,\Lambda)\times K_{c}(X,\Lambda')$
consisting of the pairs $(a,b)$ such that 
there exists a finite decomposition $X=\coprod_{i}X_{i}$ of $X$
into normal connected locally closed subschemes $\{X_{i}\}_{i}$ of $X$ such that
$(a|_{X_{i}},b|_{X_{i}})\in \Delta_{coh}(X_{i},\Lambda,\Lambda')$ for every $i$.
\item Let $a$ and $b$ be elements of $K_{c}(X,\Lambda)$ and $K_{c}(X,\Lambda')$ respectively.
We say that $a$ and $b$ have {\it the same wild ramification} 
if $(a,b)\in \Delta_{c}(X,\Lambda,\Lambda')$. 
\end{enumerate}
\end{defn}

Let $(a,b)$ be an element of $K_{c}(X,\Lambda)\times K_{c}(X,\Lambda')$.
By the induction on the dimension of $X$, there always exists 
a finite decomposition $X=\coprod_{i}X_{i}$ of $X$
into normal connected locally closed 
subschemes $\{X_{i}\}_{i}$ of $X$ such that $(a|_{X_{i}},b|_{X_{i}})\in K_{coh}(X_{i},\Lambda)
\times K_{coh}(X_{i},\Lambda')$ for every $i$.
If $\Lambda=\Lambda'$, then we have $(a,b)\in \Delta_{coh}(X,\Lambda,\Lambda)$
if and only if $(b,a)\in \Delta_{coh}(X,\Lambda,\Lambda)$,
and we have $(a,b)\in \Delta_{c}(X,\Lambda,\Lambda)$
if and only if $(b,a)\in \Delta_{c}(X,\Lambda,\Lambda)$.

\begin{lem}
\label{lemeqcn}
Let $a$ and $b$ be elements of $K_{c}(X,\Lambda)$ and $K_{c}(X,\Lambda')$ respectively.
\begin{enumerate}
\item Assume that $a\in K_{coh}(X,\Lambda)$ and $b\in K_{coh}(X,\Lambda')$.
Then the following are equaivalent:
\begin{enumerate}
\item $(a,b)\in \Delta_{coh}(X,\Lambda, \Lambda')$.
\item $a$ and $b$ satisfy the following condition for a normal compactification $\bar{X}$ of
$X\rightarrow S$ containing $X$ as a dense open subscheme and for every geometric point
$\bar{x}$ of $\bar{X}$:
\begin{itemize}
\item[(W)] Let $M$ and $N$ be the elements of 
$K_{\cdot}(\Lambda[G])$ and $K_{\cdot}(\Lambda'[G])$
corresponding to the pull-backs of $a$ and $b$ to
$\bar{X}_{(\bar{x})}\times_{\bar{X}}X$ respectively, where $G$ is a finite quotient of the inertia group
$I_{\bar{x}}=\pi_{1}(\bar{X}_{(\bar{x})}\times_{\bar{X}}X,\bar{t})$ 
for a geometric point $\bar{t}$ of $\bar{X}_{(\bar{x})}\times_{\bar{X}}X$.
For every element $g\in G$ of $p$-power order, we have $\dim M^{g}=\dim N^{g}$.
\end{itemize}
\end{enumerate}
\item The following are equivalent:
\begin{enumerate}
\item $(a,b)\in \Delta_{c}(X,\Lambda,\Lambda')$, namely $a$ and $b$ have the same wild ramification.
\item There exists a finite decomposition $X=\coprod_{i}X_{i}$ of $X$ into normal connected locally closed 
subschemes $\{X_{i}\}_{i}$ of $X$ and normal compactifications $\{\bar{X}_{i}\}_{i}$ of $\{X_{i}\rightarrow
S\}_{i}$ containing $\{X_{i}\}_{i}$ as dense open subschemes respectively
such that 
$(a|_{X_{i}},b|_{X_{i}})\in K_{coh}(X_{i},\Lambda)\times K_{coh}(X_{i},\Lambda')$ for every $i$
and that $a|_{X_{i}}$ and $b|_{X_{i}}$ satisfy the condition (W) in (i) (b) for every $i$ and 
every geometric point $\bar{x}$ of $\bar{X}_{i}$.
\end{enumerate}
\end{enumerate}
\end{lem}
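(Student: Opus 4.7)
My plan is to prove (i) by directly unwinding the definitions of $E_{X/S}$ and $\Delta_{coh}(X,\Lambda,\Lambda')$, and then to deduce (ii) piecewise from (i).

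For (i), since $a$ and $b$ are virtual continuous finite-dimensional representations of $\pi_{1}(X,\bar t_{0})$, they both factor through a common finite quotient $Q$. Condition (a) is then equivalent to $\dim a^{g}=\dim b^{g}$ for all $g$ in the image $E_{X/S}(Q)\subset Q$. By the cofilteredness recorded in the paragraph following Definition \ref{defezs}, one may choose a normal compactification $\bar X$ of $X\to S$ containing $X$ as a dense open subscheme so that $E_{X/S,\bar X}(Q)=E_{X/S}(Q)$, after enlarging $\bar X$ if necessary.

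The key observation is that in any finite group the set-theoretic union of the $p$-Sylow subgroups coincides with the set of $p$-power-order elements, and this description is stable under conjugation. Applied to the image of $I_{\bar x}$ in $Q$, it shows that the intersection of $E_{X/S,\bar X}(Q)$ with the image of $I_{\bar x}$ is precisely the set of $p$-power-order elements of that image. Moreover, a choice of path from $\bar t$ to $\bar t_{0}$ identifies the pullback $M$ (resp.\ $N$) of $a$ (resp.\ $b$) to $\bar X_{(\bar x)}\times_{\bar X}X$ with the restriction of the $Q$-representation $a$ (resp.\ $b$) to the image of $I_{\bar x}$ in $Q$; taking this image as $G$ translates condition (W) at $\bar x$ into the assertion that $\dim a^{g}=\dim b^{g}$ for every $p$-power-order element $g$ of the image of $I_{\bar x}$ in $Q$. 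Running $\bar x$ over $\bar X$ and unioning recovers exactly $E_{X/S,\bar X}(Q)=E_{X/S}(Q)$, which yields (a) $\Leftrightarrow$ (b). Neither Lemma \ref{lembtr} nor Lemma \ref{lemdimcn} is needed here, since the definition of $\Delta_{coh}$ is already phrased directly in terms of dimensions of fixed parts.

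For (ii), the requirement that $(a|_{X_{i}},b|_{X_{i}})\in K_{coh}(X_{i},\Lambda)\times K_{coh}(X_{i},\Lambda')$ on the pieces of a suitable stratification is automatic by the dimension-induction observation recorded after Definition \ref{defswr}, so both (a) and (b) may be taken to start from such a decomposition. On each piece $X_{i}$, part (i) gives $(a|_{X_{i}},b|_{X_{i}})\in \Delta_{coh}(X_{i},\Lambda,\Lambda')$ if and only if there exists a normal compactification $\bar X_{i}$ of $X_{i}\to S$ on which (W) holds; choosing the $\bar X_{i}$ independently for each $i$ yields (a) $\Rightarrow$ (b), and the converse is immediate. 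The main technical obstacle is the careful bookkeeping of geometric base points and paths: the definition of $E_{\bar z,\bar v}$ depends on a choice of geometric point $\bar v$ above $\bar v_{0}$, but the remark following Definition \ref{defezs} together with the conjugation-stability of $E_{X/S}$ ensures that the condition $\dim a^{g}=\dim b^{g}$ descends to conjugacy classes and is therefore insensitive to these choices.
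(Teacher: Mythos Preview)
Your argument is correct and follows essentially the same route as the paper: pass to a common finite quotient $Q$, use cofilteredness to pick a compactification with $E_{X/S,\bar X}(Q)=E_{X/S}(Q)$, and identify the $p$-power elements in the image of $I_{\bar x}$ with the contribution of $\bar x$ to $E_{X/S,\bar X}(Q)$; part (ii) is then the immediate stratum-by-stratum consequence. One small wrinkle in your write-up: the sentence ``Running $\bar x$ over $\bar X$ \ldots\ yields (a)~$\Leftrightarrow$~(b)'' literally proves (a)~$\Leftrightarrow$~[(W) for the \emph{specific} $\bar X$ you chose], whereas (b) is existential over compactifications. For (b)~$\Rightarrow$~(a) you should observe that for \emph{any} $\bar X'$ witnessing (b), the same union-of-$p$-power-loci argument gives the condition on all of $E_{X/S,\bar X'}(Q)$, and $E_{X/S}(Q)\subset E_{X/S,\bar X'}(Q)$ by the definition of $E_{X/S}$ as an intersection. (The paper's own proof is equally terse on this point.)
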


\begin{proof}
The assertion (ii) follows from (i).
We prove (i).
Let $Q$ be a finite quotient of $\pi_{1}(X)$ through which $\pi_{1}(X)$ acts $a$ and $b$.
Take a normal compactification $\bar{X}$ of $X\rightarrow S$ containing $X$ as a dense open subscheme such that $E_{X/S}(Q)=E_{X/S,\bar{X}}(Q)$.
Then (a) implies (b) for the compactification $\bar{X}$.
Suppose that the condition (b) holds.
Let $\bar{x}$ be a geometric point of $\bar{X}$ and $\bar{t}_{0}$ a geometric generic point of $X$.
Suppose that $\bar{t}$ is lying above a geometric point $\bar{t}_{0}'$ of $X$.
Then $E_{X/S,\bar{x}}$ is equal to the union of conjugates of the union of images of
$p$-Sylow subgroups of $I_{\bar{x}}$
by an isomorphism $\pi_{1}(X,\bar{t}_{0}')\rightarrow \pi_{1}(X,\bar{t}_{0})$.
Hence (b) implies (a).
\end{proof}

For an $S$-morphism $f\colon X\rightarrow Y$ of separated schemes over $S$ of finite type, 
let $f_{!}\colon K_{c}(X,\Lambda)\rightarrow K_{c}(Y,\Lambda)$
and $f^{*}\colon K_{c}(Y,\Lambda)\rightarrow K_{c}(X,\Lambda)$
denote the morphisms induced by
the functors $Rf_{!}$ and $f^{*}$ respectively.

\begin{prop}
\label{propfct}
Let $f\colon X\rightarrow Y$ be an $S$-morphism of 
separated schemes over $S$ of finite type.
\begin{enumerate}
\item $f^{*}\times f^{*}\colon K_{c}(Y,\Lambda)\times K_{c}(Y,\Lambda')\rightarrow
K_{c}(X,\Lambda)\times K_{c}(X,\Lambda')$ induces $f^{*}\times f^{*}\colon
\Delta_{c}(Y,\Lambda,\Lambda')\rightarrow \Delta_{c}(X,\Lambda,\Lambda')$.
\item If $f$ is quasi-finite, then 
$f_{!}\times f_{!}\colon K_{c}(X,\Lambda)\times K_{c}(X,\Lambda')\rightarrow
K_{c}(Y,\Lambda)\times K_{c}(Y,\Lambda')$ induces $f_{!}\times f_{!}\colon
\Delta_{c}(X,\Lambda,\Lambda')\rightarrow \Delta_{c}(Y,\Lambda,\Lambda')$.
\end{enumerate}
\end{prop}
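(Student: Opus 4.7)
The plan is to handle (i) directly by compatibility of stratifications, and to reduce (ii) to the case where $f$ is finite \'{e}tale between normal connected schemes, which is then settled by the standard fixed-point formula for induced representations together with Lemma \ref{lemvifet} (ii).

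For (i), let $(a,b)\in\Delta_{c}(Y,\Lambda,\Lambda')$ be witnessed by a decomposition $Y=\coprod_{j}Y_{j}$ into normal connected locally closed subschemes with $(a|_{Y_{j}},b|_{Y_{j}})\in\Delta_{coh}(Y_{j},\Lambda,\Lambda')$. I would refine each preimage $f^{-1}(Y_{j})$ into normal connected locally closed subschemes $X_{j,k}$ to obtain a compatible decomposition $X=\coprod_{j,k}X_{j,k}$ with induced morphisms $f_{j,k}\colon X_{j,k}\to Y_{j}$. The restrictions $(f^{*}a)|_{X_{j,k}}=f_{j,k}^{*}(a|_{Y_{j}})$ and the analogue for $b$ lie in $K_{coh}(X_{j,k},\Lambda)$ and $K_{coh}(X_{j,k},\Lambda')$, corresponding as virtual representations to the pullbacks along $\phi_{f_{j,k}}$ of those underlying $a|_{Y_{j}}$ and $b|_{Y_{j}}$. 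For $g\in E_{X_{j,k}/S}$, Lemma \ref{lemvifet} (i) gives $\phi_{f_{j,k}}(g)\in E_{Y_{j}/S}$, so $\dim(f^{*}a)^{g}=\dim(a|_{Y_{j}})^{\phi_{f_{j,k}}(g)}=\dim(b|_{Y_{j}})^{\phi_{f_{j,k}}(g)}=\dim(f^{*}b)^{g}$, yielding (i).

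For (ii), first reduce to the coherent case: the witness stratification $X=\coprod_{i}X_{i}$ with immersions $j_{i}\colon X_{i}\hookrightarrow X$ gives $a=\sum_{i}(j_{i})_{!}(a|_{X_{i}})$ in $K_{c}(X,\Lambda)$ and similarly for $b$, so $f_{!}a=\sum_{i}(f\circ j_{i})_{!}(a|_{X_{i}})$; since $\Delta_{c}(Y,\Lambda,\Lambda')$ is a subgroup, it suffices to treat each summand, replacing $f$ by the still quasi-finite morphism $f\circ j_{i}$. One may thus assume $X$ is normal connected and $(a,b)\in\Delta_{coh}(X)$. By Noetherian induction on $Y$, combining generic finiteness (Zariski's main theorem) with generic \'{e}taleness, I would stratify $Y$ into normal connected locally closed subschemes $Y_{k}$ such that $f^{-1}(Y_{k})$ decomposes as a disjoint union of normal connected locally closed subschemes of $X$, each mapping finite \'{e}tale onto $Y_{k}$. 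The problem then reduces to treating a single finite \'{e}tale morphism $f_{k}\colon X_{k}\to Y_{k}$ between normal connected schemes with $(a|_{X_{k}},b|_{X_{k}})\in\Delta_{coh}(X_{k})$.

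Identifying $H=\pi_{1}(X_{k})$ with the open subgroup of $G=\pi_{1}(Y_{k})$ cut out by $f_{k}$, the virtual representations underlying $f_{k,!}(a|_{X_{k}})$ and $f_{k,!}(b|_{X_{k}})$ are the induced representations $\mathrm{Ind}_{H}^{G}M$ and $\mathrm{Ind}_{H}^{G}N$, where $M$ and $N$ correspond to $a|_{X_{k}}$ and $b|_{X_{k}}$. The standard fixed-point formula
\begin{equation}
\dim(\mathrm{Ind}_{H}^{G}M)^{g}=\sum_{O}\dim M^{x_{O}^{-1}g^{|O|}x_{O}}, \notag
\end{equation}
indexed by $\langle g\rangle$-orbits $O\subset G/H$ with chosen representatives $x_{O}$, reduces the comparison to checking $\dim M^{x_{O}^{-1}g^{|O|}x_{O}}=\dim N^{x_{O}^{-1}g^{|O|}x_{O}}$ orbit by orbit. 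For $g\in E_{Y_{k}/S}$ the orbit size $|O|$ is a $p$-power, $g^{|O|}$ remains in a pro-$p$ subgroup of some inertia group containing $g$, and conjugation stability places $x_{O}^{-1}g^{|O|}x_{O}\in E_{Y_{k}/S}\cap H=E_{X_{k}/S}$ via Lemma \ref{lemvifet} (ii); the hypothesis on $(a|_{X_{k}},b|_{X_{k}})$ then matches the two sides summand by summand. I expect the main obstacle to be arranging the stratification in (ii) so that $f$ becomes simultaneously finite \'{e}tale on normal connected pieces, together with the careful verification of the induced-representation fixed-point formula in the virtual-representation setting.
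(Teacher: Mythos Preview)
Your argument for (i) is correct and matches the paper's.

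For (ii), your core computation in the finite \'etale case is correct and genuinely different from the paper's. The paper reaches the finite \'etale situation as you do, but then argues via Brauer traces: it invokes the character formula $\btr_{f_{*}a}(g)=\sum_{r\in R,\; rgr^{-1}\in\pi_{1}(X)}\btr_{a}(rgr^{-1})$ from \cite[Exercise~18.2]{se}, applies Lemma~\ref{lemvifet} (ii) to see that each $rgr^{-1}$ lies in $E_{X/S}$, and then uses Lemmas~\ref{lembtr} and~\ref{lemdimcn} to pass from equality of normalized traces back to equality of fixed-part dimensions. Your direct fixed-point formula $\dim(\mathrm{Ind}_{H}^{G}M)^{g}=\sum_{O}\dim M^{x_{O}^{-1}g^{|O|}x_{O}}$ bypasses this Brauer-trace detour entirely and gives the conclusion orbit by orbit; since any power of $g$ lies in the same image of a $p$-Sylow as $g$ and $E_{Y_{k}/S}$ is conjugation-stable, Lemma~\ref{lemvifet} (ii) places each $x_{O}^{-1}g^{|O|}x_{O}$ in $E_{X_{k}/S}$ exactly as you say. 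This is a cleaner route to the same endpoint.

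There is, however, a real gap in your reduction. You assert that one can stratify $Y$ so that over each stratum $f$ becomes a disjoint union of finite \'etale maps onto $Y_{k}$, citing ``generic \'etaleness''. In residue characteristic $p>0$ this is false: a finite purely inseparable morphism such as Frobenius is nowhere \'etale, so no stratification of $Y$ produces finite \'etale pieces. The paper handles this by factoring the function field extension $F_{X}/F_{Y}$ through its separable closure $F^{s}$ and replacing $X$ by the normalization of $Y$ in $F^{s}$; the resulting map $X\to X'$ is radicial, hence induces an equivalence of \'etale topoi, so $f_{*}a$ and $f_{*}b$ may be computed on $X'$, which \emph{is} generically \'etale over $Y$. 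You need to insert this step (and the easy open/closed immersion cases) before your Noetherian induction can reach the finite \'etale situation.
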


\begin{proof}
The proof goes similarly as the proof of \cite[Proposition 2.3.3 (i), (ii)]{vi1}.

(i) Let $(a,b)$ be an element of $\Delta_{c}(Y,\Lambda,\Lambda')$. 
Take a finite decomposition $Y=\coprod_{i}Y_{i}$ of $Y$
into normal connected locally closed subschemes $\{Y_{i}\}_{i}$ of $Y$
such that $(a|_{Y_{i}},b|_{Y_{i}})\in \Delta_{coh}(Y_{i},\Lambda,\Lambda')$ for every $i$.
We put $X_{i}=f^{-1}(Y_{i})$ and let $X_{i}=\coprod_{j}X_{ij}$ be a finite decomposition of $X_{i}$ into
normal connected locally closed subschemes $\{X_{ij}\}_{j}$ of $X_{i}$ for $i$.
Let $f_{ij}\colon X_{ij}\rightarrow Y_{i}$ be the morphism induced by $f$ for $i$ and $j$. 
Then, by Lemma \ref{lemvifet} (i), we have $(f_{ij}^{*}(a|_{Y_{i}}),f^{*}_{ij}(b|_{Y_{i}}))
\in \Delta_{coh}(X_{ij},\Lambda,\Lambda')$ for every $i$ and $j$.
Hence the assertion follows.

(ii) Let $(a,b)$ be an element of $\Delta_{c}(X,\Lambda,\Lambda')$.  
By decomposing $Y$ into the disjoint union of connected components of $Y$,
we may assume that $Y$ is connected.
By Zariski's main theorem, it is sufficient to prove the assertion in the case where $f$ is an open 
immersion or a finite morphism.
Suppose that $f$ is an open immersion.
Take the decomposition $Y=X\amalg (Y\setminus X)$.
Since $(f_{!}a)|_{X}=a$ and $(f_{!}b)|_{X}=b$, it follows that $(f_{!}a)|_{X}$ and $(f_{!}b)|_{X}$
have the same wild ramification.
Since $(f_{!}a)|_{Y\setminus X}=0$ and $(f_{!}b)|_{Y\setminus X}=0$, 
by taking
a finite decomposition of $Y\setminus X$ into normal connected locally closed subschemes of $Y\setminus X$,
we see that $(f_{*}a)|_{Y\setminus X}$ and $(f_{*}b)|_{Y\setminus X}$ have the same wild ramification.
Hence the assertion follows if $f$ is an open immersion.

Assume that $f$ is a finite morphism.
Then we have $f_{!}a=f_{*}a$ and $f_{!}b=f_{*}b$.
By Lemma \ref{lemeqcn} (ii) and the induction on the dimension of $Y$, it is sufficient to prove that
there exists a normal dense open subscheme $U$ of $Y$
such that $((f_{*}a)|_{U},(f_{*}b)|_{U})\in \Delta_{coh}(U,\Lambda,\Lambda')$.
By shrinking $Y$ to a normal affine dense open subscheme if necessary,
we may assume that $Y$ is normal affine and thus that $X$ is affine.
Since $f_{*}a$ and $f_{*}b$ are constructible, we may assume that 
$(f_{*}a,f_{*}b)\in K_{coh}(Y,\Lambda)\times K_{coh}(Y,\Lambda')$
by shrinking $Y$ to a dense open subscheme.
Let $X=\coprod_{i}X_{i}$ be a finite decomposition of $X$ into normal connected locally 
closed subschemes $\{X_{i}\}_{i}$ of $X$ such that $(a|_{X_{i}},b|_{X_{i}}) \in \Delta_{coh}(X_{i},
\Lambda,\Lambda')$ for every $i$.
Let $i_{X_{i}}\colon X_{i}\rightarrow X$ be the immersion for $i$.
Since $a=\sum_{i}i_{X_{i}!}(a|_{X_{i}})$ and $b=\sum_{i}i_{X_{i}!}(b|_{X_{i}})$, 
we may replace $f$, $a$, and $b$ by $f\circ i_{X_{i}}$, $a|_{X_{i}}$, and $b|_{X_{i}}$ respectively.
Hence we may assume that $X$ is normal connected and that
$(a,b)\in \Delta_{coh}(X,\Lambda,\Lambda')$.

Since $f$ is factorized to the composition of a finite surjective morphism and a closed immersion,
we may assume that $f$ is a finite surjective morphism or a closed immersion.
Suppose that $f$ is a closed immersion.
Then we have $(f_{*}a)|_{X}=a$ and $(f_{*}b)|_{X}=b$.
Further we have $(f^{*}a)|_{Y\setminus X}=0$ and $(f_{*}b)|_{Y\setminus X}=0$.
Hence the assertion follows similarly as the case where $f$ is an open immersion.
Therefore we may assume that $f$ is a finite surjective morphism.
Let $F_{X}$ and $F_{Y}$ be the function fields of $X$ and $Y$ respectively.
Let $F^{s}$ be the separable closure of $F_{Y}$ in $F_{X}$.
By replacing $X$ by the normalization of $Y$ in $F^{s}$ if necessary, 
we may assume that $f$ is generically \'{e}tale.
By shrinking $Y$ to a dense open subscheme if nesessary, we may assume that $f$ is finite \'{e}tale. 
Then $f_{*}a$ and $f_{*}b$ are $a\otimes_{\Lambda[\pi_{1}(X)]}\Lambda[\pi_{1}(Y)]$ and
$b\otimes_{\Lambda'[\pi_{1}(X)]}\Lambda'[\pi_{1}(Y)]$ respectively.
Further, by Lemma \ref{lemvifet} (ii), we have $E_{X/S}=E_{Y/S}\cap \pi_{1}(X)$.

Let $g$ be an element of $E_{Y/S}$ and let $R$ be a representative system of $\pi_{1}(Y)/\pi_{1}(X)$.
By \cite[Exercise 18.2]{se}, we have
$\btr_{f_{*}a}(g)=\sum_{r\in R, rgr^{-1}\in \pi_{1}(X)}\btr_{a}(rgr^{-1})$ and  
$\btr_{f_{*}b}(g)=\sum_{r\in R, rgr^{-1}\in \pi_{1}(X)}\btr_{b}(rgr^{-1})$.
Let $r$ be an element of $R$ such that $rgr^{-1}\in \pi_{1}(X)$.
Since $E_{X/S}=E_{Y/S}\cap \pi_{1}(X)$, we have $rgr^{-1}\in E_{X/S}$.
Let $E$ be a subfield of the fractional field of $W(\Lambda)$ of finite degree over $\mathbf{Q}$
containing $\btr_{a}(rgr^{-1})$ 
for every $r\in R$ such that $rgr^{-1}\in \pi_{1}(X)$
and let $E'$ be a subfield of the fractional field of $W(\Lambda')$ 
of finite degree over $\mathbf{Q}$ containing $\btr_{b}(rgr^{-1})$ for every $r\in R$ such that $rgr^{-1}\in \pi_{1}(X)$.
Since $\dim a^{g'}=\dim b^{g'}$ for every $g'\in E_{X/S}$, 
we have 
\begin{equation}
\frac{1}{[E:\mathbf{Q}]}\tr_{E/\mathbf{Q}}\btr_{a}(rgr^{-1})=
\frac{1}{[E':\mathbf{Q}]}\tr_{E'/\mathbf{Q}}\btr_{b}(rgr^{-1}) \notag
\end{equation}
for every $r\in R$ such that $rgr^{-1}\in \pi_{1}(X)$ by Lemma \ref{lembtr}.
Since $[E:\mathbf{Q}]$ and $[E':\mathbf{Q}]$ are positive integers
and since $\tr_{E/\mathbf{Q}}$ and $\tr_{E'/\mathbf{Q}}$ are additive functions, we have 
\begin{equation}
\frac{1}{[E:\mathbf{Q}]}\tr_{E/\mathbf{Q}}\btr_{f_{*}a}(g)=
\frac{1}{[E':\mathbf{Q}]}\tr_{E'/\mathbf{Q}}\btr_{f_{*}b}(g). \notag
\end{equation}
Hence the assertion follows by Lemma \ref{lembtr} and Lemma \ref{lemdimcn}.
\end{proof}

\begin{defn}[cf.\ {\cite[D\'{e}finition 2.3.1]{vi1}}]
\label{defkczt}
We define a subgroup $K_{c}(X,\Lambda)_{0}$ of $K_{c}(X,\Lambda)$ 
to be the subgroup of $K_{c}(X,\Lambda)$ consisting the elements $a\in K_{c}(X,\Lambda)$
such that $a$ and $0$ have the same wild ramification with $\Lambda'=\Lambda$.
We call the subgroup $K_{c}(X,\Lambda)_{0}$ {\it the Grothendieck group of
constructible sheaves of $\Lambda$-modules on $X$ of wild ramification $0$}.
\end{defn}

\begin{cor}[cf.\ {\cite[Proposition 2.3.3]{vi1}}]
Let $f\colon X\rightarrow Y$ be an $S$-morphism of 
separated schemes over $S$ of finite type.
\begin{enumerate}
\item The functor $f^{*}\colon K_{c}(Y,\Lambda)\rightarrow K_{c}(X,\Lambda)$ induces
$f^{*}\colon K_{c}(Y,\Lambda)_{0}\rightarrow K_{c}(X,\Lambda)_{0}$.
\item If $f$ is quasi-finite, then the functor $f_{!}\colon K_{c}(X,\Lambda)\rightarrow K_{c}(Y,\Lambda)$
induces $f_{!}\colon K_{c}(X,\Lambda)_{0}\rightarrow K_{c}(Y,\Lambda)_{0}$.
\item Let $X=\coprod_{i}X_{i}$ be a finite decomposition into locally closed subschemes $\{X_{i}\}_{i}$
of $X$. Then we have $K_{c}(X,\Lambda)_{0}=\bigoplus_{i}K_{c}(X_{i},\Lambda)_{0}$.
\end{enumerate}
\end{cor}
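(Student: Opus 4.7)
The plan is to derive (i) and (ii) directly from Proposition \ref{propfct} by specializing $\Lambda' = \Lambda$ and taking the second coordinate to be $0$, and then to derive (iii) from (i) and (ii) together with the standard additivity of $K_{c}(-,\Lambda)$ along a finite decomposition into locally closed subschemes.

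For (i), suppose $a \in K_{c}(Y,\Lambda)_{0}$, so that $(a,0) \in \Delta_{c}(Y,\Lambda,\Lambda)$ by Definition \ref{defkczt}. Applying Proposition \ref{propfct} (i) gives $(f^{*}a, f^{*}0) \in \Delta_{c}(X,\Lambda,\Lambda)$, and since $f^{*}0 = 0$ this precisely says $f^{*}a \in K_{c}(X,\Lambda)_{0}$. Part (ii) is proved in exactly the same way, by applying Proposition \ref{propfct} (ii) to $(a,0)$ and using $f_{!}0 = 0$.

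For (iii), I would use the well-known fact that the morphism $\bigoplus_{i} K_{c}(X_{i},\Lambda) \to K_{c}(X,\Lambda)$, sending $(a_{i})_{i}$ to $\sum_{i} i_{X_{i}!}(a_{i})$, is an isomorphism, with inverse given by $a \mapsto (a|_{X_{i}})_{i}$; this is the standard additivity used implicitly in the proof of Proposition \ref{propfct}. It remains to show that this isomorphism restricts to an isomorphism between $K_{c}(X,\Lambda)_{0}$ and $\bigoplus_{i} K_{c}(X_{i},\Lambda)_{0}$. Each locally closed immersion $i_{X_{i}}\colon X_{i}\hookrightarrow X$ is quasi-finite, so (ii) shows $i_{X_{i}!}(a_{i}) \in K_{c}(X,\Lambda)_{0}$ whenever $a_{i} \in K_{c}(X_{i},\Lambda)_{0}$, yielding one inclusion. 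Conversely, for $a \in K_{c}(X,\Lambda)_{0}$, part (i) applied to $i_{X_{i}}$ gives $a|_{X_{i}} = i_{X_{i}}^{*}a \in K_{c}(X_{i},\Lambda)_{0}$ for each $i$, so the decomposition $a = \sum_{i} i_{X_{i}!}(a|_{X_{i}})$ lies in the image of $\bigoplus_{i} K_{c}(X_{i},\Lambda)_{0}$.

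No serious obstacle arises: (i) and (ii) are formal consequences of Proposition \ref{propfct}, and (iii) is a direct combination of them with the standard additivity of Grothendieck groups of constructible sheaves. The only minor point worth noting is that locally closed immersions are quasi-finite, so that (ii) is applicable in the proof of (iii).
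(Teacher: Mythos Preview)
Your proof is correct and follows essentially the same approach as the paper: parts (i) and (ii) are deduced from Proposition \ref{propfct} via the identification of $K_{c}(X,\Lambda)_{0}$ with $\Delta_{c}(X,\Lambda,\Lambda)\cap (K_{c}(X,\Lambda)\times\{0\})$, and part (iii) is obtained from (i), (ii), and the decomposition $a=\sum_{i}i_{X_{i}!}(a|_{X_{i}})$, using that each $i_{X_{i}}$ is quasi-finite.
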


\begin{proof}
Since $\Delta_{c}(X,\Lambda,\Lambda)\cap (K_{c}(X,\Lambda)\times \{0\})$ and
$\Delta_{c}(Y,\Lambda,\Lambda)\cap (K_{c}(Y,\Lambda)\times \{0\})$ are isomorphic to
$K_{c}(X,\Lambda)_{0}$ and $K_{c}(Y,\Lambda)_{0}$ respectively by the first projections,
the assertions (i) and (ii) follow from Proposition \ref{propfct} (i) and (ii) respectively.
We prove (iii).
Let $a$ be an element of $K_{c}(X,\Lambda)$ and let
$i_{X_{i}}\colon X_{i}\rightarrow X$ be the immersion for $i$.
Since $a=\sum_{i}i_{X_{i}!}(a|_{X_{i}})$ and $i_{X_{i}}$ is quasi-finite for every $i$,
the assertion follows by (i) and (ii).
\end{proof}

\section{Proof of Theorem \ref{thmmain}}
\label{spfmain}
Let the notation be as in Section \ref{sswr}.
We devote this section to the proof of Theorem \ref{thmmain}. 
The proof goes similarly as Vidal's proof of \cite[Th\'{e}or\`{e}me 0.1]{vi2}.

Let $(a,b)$ be an element of $\Delta_{c}(X,\Lambda,\Lambda')$
and assume that $S$ is strict local.
Let $f\colon X\rightarrow Y$ be an $S$-morphism of separated schemes over $S$ of finite type.
Take the decompositions $X=X_{\eta}\amalg X_{s}$ and $Y=Y_{\eta}\amalg Y_{s}$, and
decompose $f$ into $f_{\eta}\oplus f_{s}$, where $f_{\eta}\colon X_{\eta}\rightarrow Y_{\eta}$
and $f_{s}\colon X_{s}\rightarrow Y_{s}$ are induced by $f$.
Let $i_{Y_{\eta}}\colon Y_{\eta}\rightarrow Y$ and $i_{Y_{s}}\colon Y_{s}\rightarrow Y$
be immersions.
Since $f_{!}a=i_{Y_{\eta}!}f_{\eta!}(a|_{X_{\eta}})+i_{Y_{s}!}f_{s!}(a|_{X_{s}})$
and $f_{!}b=i_{Y_{\eta}!}f_{\eta!}(b|_{X_{\eta}})+i_{Y_{s}!}f_{s!}(b|_{X_{s}})$,
we may replace $f$ by $f_{\eta}$ or $f_{s}$ by Proposition \ref{propfct}.

Let $z$ be the generic point $\eta$ of $S$ or the closed point $s$ of $S$. 
Since the assertion is local, we may assume that $Y=\Spec B$ is affine.
Further, by taking a finite decomposition of $X$ into locally closed affine
subschemes of $X$ if necessary and applying Proposition \ref{propfct} (i), 
we may assume that $X=\Spec A$ is affine.
We identify $A$ with $B[t_{1},\ldots,t_{d}]/I$ for some $d\in \mathbf{Z}_{\ge 0}$ and an ideal $I$
of $B[t_{1},\ldots, t_{d}]$.
Since the assertion follows if $f$ is a closed immersion by Proposition \ref{propfct} (ii),
by taking the factorization $B\rightarrow B[t_{1}]\rightarrow \cdots \rightarrow B[t_{1},\ldots,t_{d}]
\rightarrow B[t_{1},\ldots,t_{d}]/I=A$ of $f^{\sharp}\colon B\rightarrow A$, 
we may assume that $f$ is (smooth) of relative dimension $1$. 
Since $f_{!}a$ and $f_{!}b$ are constructible, as in the proof of Proposition \ref{propfct} (ii),
we may assume that $Y$ is normal connected and 
that $(f_{!}a,f_{!}b)\in K_{coh}(Y,\Lambda)\times K_{coh}(Y,\Lambda')$.
Further, as in the proof of Proposition \ref{propfct} (ii), 
we may assume that $X$ is normal connected and 
that $(a,b)\in \Delta_{coh}(X,\Lambda,\Lambda')$.

Let $p\colon V\rightarrow X$ be a galois \'{e}tale covering 
trivializing $a$ and $b$ with galois group $H$.
By shrinking $Y$ to a dense open subscheme if necessary, 
we may assume that $(f\circ p)_{!}\Lambda\in K_{coh}(Y,\Lambda)$
and $(f\circ p)_{!}\Lambda'\in K_{coh}(Y,\Lambda')$.
Let $\tau$ be the generic point of $Y$.
We apply the following proposition proved by Vidal in \cite[Section 3]{vi2}:

\begin{prop}[{\cite[Corollary 3.0.5 and the proof of Th\'{e}or\`{e}me 0.1]{vi2}}]
\label{propvitr}
Let $f\colon X\rightarrow Y$ be a $z$-morphism of relative dimension $\le 1$
of normal affine connected schemes over $z$ of finite type.
Let $a$ be an element of $K_{coh}(X,\Lambda)$
and let $p\colon V\rightarrow X$ be a galois \'{e}tale covering trivializing $a$
with galois group $H$.
Let $E_{\lambda}$ be a finite extension of $\mathbf{Q}_{\ell}$ whose integer ring has
the residue field $\Lambda$.
Assume that $f_{!}a\in K_{coh}(Y,\Lambda)$ and $(f\circ p)_{!} \Lambda \in K_{coh}(Y,\Lambda)$.
Let $g$ be an element of $E_{\tau/S}(Y)$
and let $P$ be the subgroup of $\pi_{1}(Y)$ generated by $g$.
Let $P'$ be a pro-$p$-subgroup of $Gal(\bar{\tau}/\tau)$ whose image by the canonical
morphism $Gal(\bar{\tau}/\tau)\rightarrow \pi_{1}(Y)$ is $P$.
We put $H'=P'\times H$.
Then we have

\begin{equation}
\btr_{f_{!}a}(g)=\frac{1}{|H|}\sum_{\substack{h'\in H' \\ p_{P'}(h')=g'}}
\tr_{R\Gamma_{c}(V_{\bar{\tau}},E_{\lambda})}(h')\times
\btr_{a}(p_{H}(h')), \notag
\end{equation}
where $p_{P'}\colon H'\rightarrow P'$ and $p_{H}\colon H'\rightarrow H$
are projections and $g'\in P'$ is a lift of $g$.  
\end{prop}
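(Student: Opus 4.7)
The plan is to compute $\btr_{f_{!}a}(g)$ by realizing it as the Brauer trace of any lift $g'\in P'\subset \Gal(\bar{\tau}/\tau)$ acting on the stalk $(Rf_{!}a)_{\bar{\tau}}\cong R\Gamma_{c}(X_{\bar{\tau}},a|_{X_{\tau}})$ at a geometric generic point $\bar{\tau}$ of $Y$; the hypothesis $f_{!}a\in K_{coh}(Y,\Lambda)$ makes this identification valid, after passing to a dense open of $Y$ over which $f_{!}a$ is lisse. Since $p\colon V\to X$ is Galois \'etale with group $H$ and trivializes $a$, we write $a$ as the class of a $\Lambda[H]$-module $M$. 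Then there is a canonical $\Gal(\bar{\tau}/\tau)$-equivariant isomorphism
\[
R\Gamma_{c}(X_{\bar{\tau}},a|_{X_{\tau}}) \cong \bigl(R\Gamma_{c}(V_{\bar{\tau}},\Lambda)\otimes_{\Lambda} M\bigr)^{H},
\]
where $H$ acts diagonally (by deck transformations on $V_{\bar{\tau}}$ and via its intrinsic action on $M$) and $\Gal(\bar{\tau}/\tau)$ acts only on the first factor. The two actions on $V_{\bar{\tau}}$ commute, so one obtains a genuine $P'\times H$-action on $R\Gamma_{c}(V_{\bar{\tau}},\Lambda)$.

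To convert Brauer traces into ordinary traces, I would lift to $E_{\lambda}$-coefficients: the hypothesis $(f\circ p)_{!}\Lambda\in K_{coh}(Y,\Lambda)$ ensures that $R\Gamma_{c}(V_{\bar{\tau}},E_{\lambda})$ is a well-behaved virtual $(P'\times H)$-representation lifting $R\Gamma_{c}(V_{\bar{\tau}},\Lambda)$, and one chooses a Brauer lift $\tilde{M}$ over $\mathcal{O}_{E_{\lambda}}$ so that $\tr_{\tilde{M}}(h)=\btr_{a}(h)$ on $\ell$-regular elements $h$. Applying the averaging idempotent $\frac{1}{|H|}\sum_{h\in H}h$ to extract $H$-invariants yields
\[
\tr_{(R\Gamma_{c}(V_{\bar{\tau}},E_{\lambda})\otimes \tilde{M})^{H}}(g')
=\frac{1}{|H|}\sum_{h\in H}\tr_{R\Gamma_{c}(V_{\bar{\tau}},E_{\lambda})}((g',h))\cdot \tr_{\tilde{M}}(h),
\]
where the factorization on the right holds because $g'\in P'$ and $h\in H$ act on different tensor factors. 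Identifying the left-hand side with $\btr_{f_{!}a}(g)$ and re-indexing the sum by $h'=(g',h)\in H'$ with $p_{P'}(h')=g'$ gives the claimed formula.

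The principal obstacle is the construction of the $H$-equivariant derived-category isomorphism above, compatibly with the inertia action through $\Gal(\bar{\tau}/\tau)$; controlling the non-proper behaviour of the fibres is what requires the relative dimension $\le 1$ hypothesis, since a curve compactification then gives enough control on $R\Gamma_{c}$ to make the averaging-and-trace calculation go through at the level of virtual representations. This geometric step is precisely the content of Vidal's Corollary 3.0.5 in \cite{vi2}, and one would either invoke it or adapt its proof in the present $z$-relative setting.
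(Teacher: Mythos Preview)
The paper gives no proof of this proposition: it is quoted verbatim from Vidal \cite[Corollary 3.0.5 and the proof of Th\'{e}or\`{e}me 0.1]{vi2} and is used as a black box in the proof of Theorem~\ref{thmmain}. There is therefore no proof in the paper to compare your proposal against.

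That said, your outline is essentially the argument one would expect (and is the shape of Vidal's argument): identify the Brauer trace on $f_{!}a$ with the trace of a lift $g'$ on the geometric generic stalk $R\Gamma_{c}(X_{\bar{\tau}},a)$, express this via the Galois cover $V\to X$ as the $H$-invariants of $R\Gamma_{c}(V_{\bar{\tau}},\Lambda)\otimes M$, pass to $E_{\lambda}$-coefficients so that Brauer traces become ordinary traces, and then apply the averaging idempotent $\tfrac{1}{|H|}\sum_{h\in H}h$ together with multiplicativity of traces on a tensor product. You also correctly isolate where the relative dimension $\le 1$ hypothesis enters, namely in the geometric control of $R\Gamma_{c}(V_{\bar{\tau}},E_{\lambda})$ and the compatibility of the $P'$- and $H$-actions, which is exactly what Vidal's Corollary 3.0.5 supplies. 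One small point worth making explicit: the element $g$ lies in $E_{\tau/S}(Y)$, hence is of $p$-power order, so it is $\ell$-regular and the identification of Brauer traces with ordinary traces on the $E_{\lambda}$-lift is legitimate; likewise every $p_{H}(h')$ arising with nonzero coefficient is $\ell$-regular. Otherwise your sketch is sound.
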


Let $Q$ be a finite quotient of $\pi_{1}(Y)$ through which $\pi_{1}(Y)$ acts on $f_{!}a$, $f_{!}b$,
$(f\circ p)_{!}\Lambda$, and $(f\circ p)_{!}\Lambda'$.
By shrinking $Y$ to a dense open subscheme if necessary,
we may assume that $E_{Y/S}(Q)=E_{\tau/S}(Q)$.
Hence we may replace $E_{Y/S}$ by $E_{\tau/S}$.
Let $g$ be an element of $E_{Y/S}$.
With the notation in Proposition \ref{propvitr},
we have
\begin{equation}
\btr_{f_{!}a}(g)=\frac{1}{|H|}\sum_{\substack{h'\in H' \\ p_{P'}(h')=g'}}
\tr_{R\Gamma_{c}(V_{\bar{\tau}},E_{\lambda})}(h')\times
\btr_{a}(p_{H}(h')).\notag
\end{equation}
Let $E_{\lambda}'$ be a finite extension of $\mathbf{Q}_{\ell'}$ whose integer ring has
the residue field $\Lambda'$.
With the notation in Proposition \ref{propvitr}, we have
\begin{equation}
\btr_{f_{!}b}(g)=\frac{1}{|H|}\sum_{\substack{h'\in H' \\ p_{P'}(h')=g'}}
\tr_{R\Gamma_{c}(V_{\bar{\tau}},E_{\lambda}')}(h')\times
\btr_{b}(p_{H}(h')). \notag
\end{equation}
Further we apply the following proposition proved by Vidal in \cite[Section 3]{vi2}:

\begin{prop}[{\cite[Corollaire 3.0.7 and the proof of Th\'{e}or\`{e}me 0.1]{vi2}}]
\label{propvip}
Let the notation and the assumption be as in Proposition \ref{propvitr}.
Let $h'$ be an element of $H'$ such that 
$\tr_{R\Gamma_{c}(V_{\tau},E_{\lambda})}(h')\neq 0$. 
Then $p_{H}(h')$ is the image of an element of $E_{X/S}$ by the surjection
$\pi_{1}(X)\rightarrow H$.
\end{prop}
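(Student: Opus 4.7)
The plan is to prove Proposition \ref{propvip} by a Lefschetz-type trace localization combined with a boundary analysis: the assertion is that if $\tr_{R\Gamma_{c}(V_{\bar\tau}, E_{\lambda})}(h') \neq 0$ for $h' = (p', h) \in P' \times H = H'$, then $h = p_{H}(h')$ is the image in $H$ of an element of $E_{X/S} \subset \pi_{1}(X)$. The guiding idea is that $p'$ lies in a pro-$p$ subgroup of $\Gal(\bar\tau/\tau)$, so it acts as a ``wild'' Galois element, and any non-vanishing fixed-point contribution to its trace must come from geometric points of a compactification of $V$ that lie over the boundary of $X$ in a compactification $\bar X$, which is precisely where $E_{X/S}$ is supported according to Definition \ref{defezs}.

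First I would choose compatible normal compactifications $\bar X$ of $X$ and $\bar V \to \bar Y$ of $f \circ p \colon V \to Y$ (invoking de Jong alterations if needed to keep $\bar V$ suitably regular and the boundary $\bar V \setminus V$ a simple normal crossings divisor), arranged so that $E_{X/S}(Q) = E_{X/S,\bar X}(Q)$ for a fixed finite quotient $Q$ of $\pi_{1}(X)$ factoring the action on $a$. The group $H$ acts on $\bar V$ and $P'$ acts on $\bar V_{\bar\tau}$, jointly giving an $H'$-action on $\bar V_{\bar\tau}$. Writing $R\Gamma_{c}(V_{\bar\tau}, E_{\lambda}) = R\Gamma(\bar V_{\bar\tau}, j_{!}E_{\lambda})$ for the open immersion $j \colon V_{\bar\tau} \hookrightarrow \bar V_{\bar\tau}$ and applying the Grothendieck--Verdier--Illusie trace formula on the proper scheme $\bar V_{\bar\tau}$ expresses $\tr_{R\Gamma_{c}(V_{\bar\tau}, E_{\lambda})}(h')$ as a sum of local contributions indexed by connected components of the fixed locus $\bar V_{\bar\tau}^{h'}$. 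For fixed points that sit in the interior $V_{\bar\tau}$, the action of $p'$ factors through the Galois action on $V \to X$, which is trivial on $V$; a cancellation argument, stratifying $V_{\bar\tau}$ by $H$-orbit types and using additivity of the trace along locally closed strata, then shows that when the $P'$-component of $h'$ is non-trivial only boundary contributions can survive.

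At a non-vanishing boundary term there is a geometric point $\bar v$ of $\bar V_{\bar\tau}$ over a boundary point $\bar x$ of $\bar X$ such that $h'$ fixes $\bar v$, i.e.\ $h'$ lies in the decomposition group of $\bar v$ inside $H'$. This decomposition group maps into the local inertia $\pi_{1}(\bar X_{(\bar x)} \times_{\bar X} X, \bar t)$ at $\bar x$, and since the $P'$-factor of $h'$ already lies in a pro-$p$ group, the element projecting to $h$ must come from a $p$-Sylow subgroup of that local inertia. By Definition \ref{defezs}~(i) this exhibits $h$ as the image of an element of $E_{\bar x, \bar v}$, hence of $E_{X/S,\bar x}$; varying $\bar x$ and conjugating yields $h$ in the image of $E_{X/S}$ as required. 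The main obstacle is the interior cancellation step: making it rigorous requires tracking how $p'$ acts through the nearby-cycle complex of $\bar V$ at its boundary components and ruling out interior contributions of the wild element, which in Vidal's framework \cite{vi2} is handled by induction on $\dim X$ via alterations, using compatibility of nearby cycles with Swan conductors, and is precisely the content of \cite[Corollaire 3.0.7]{vi2}.
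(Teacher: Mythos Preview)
The paper does not prove this proposition; it is quoted verbatim from Vidal \cite{vi2} and used as a black box. So there is no ``paper's own proof'' to compare against beyond the citation itself. Your outline is in the right spirit---Vidal does work with equivariant compactifications (nodal curve models, in fact) and a Lefschetz-type localization of the trace---but as written it has a genuine gap.

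The gap is that your argument is circular at its crux. You correctly identify the ``interior cancellation step'' as the main obstacle, and then say that making it rigorous ``is precisely the content of \cite[Corollaire 3.0.7]{vi2}''. But Corollaire 3.0.7 is (half of) the very result you are asked to prove. An outline that defers its only nontrivial step back to the statement being established is not a proof. In Vidal's actual argument the vanishing of interior contributions is not a soft cancellation by $H$-orbit type: it comes from choosing a \emph{nodal} model of the relative curve $V/Y$ after base change and alteration, computing the trace via nearby cycles on the special fibre, and showing that a pro-$p$ Galois element can only pick up contributions at the nodes. That specific geometric input is what you would need to supply.

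Two further imprecisions: first, your sentence ``the action of $p'$ factors through the Galois action on $V\to X$, which is trivial on $V$'' is not correct---$p'$ acts through $\Gal(\bar\tau/\tau)$ on the base, not through $H$, and it does move points of $V_{\bar\tau}$. Second, the step ``since the $P'$-factor of $h'$ already lies in a pro-$p$ group, the element projecting to $h$ must come from a $p$-Sylow subgroup of that local inertia'' is a non sequitur: $h$ need not itself have $p$-power order a priori, and the conclusion that it does (hence lies in the image of $E_{X/S}$) is exactly what the nonvanishing of the \emph{local} trace at the node forces, not something that follows from $p'$ being pro-$p$.
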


Let $E$ be a subfield of the fractional field of $W(\Lambda)$ of finite degree over $\mathbf{Q}$
containing $\btr_{a}(p_{H}(h'))$ for every $h'\in H'$ such that $p_{P'}(h')=g'$ and 
$\tr_{R\Gamma_{c}(V_{\bar{\tau}},E_{\lambda})}(h')\neq 0$.
Further, 
let $E'$ be a subfield of the fractional field of $W(\Lambda')$ of finite degree over $\mathbf{Q}$
containing $\btr_{b}(p_{H}(h'))$ for every $h'\in H'$ such that $p_{P'}(h')=g'$
and $\tr_{R\Gamma_{c}(V_{\bar{\tau}},E_{\lambda}')}(h')\neq 0$.
By \cite[Proposition 2.2.1]{vi2} and \cite[Proposition 2.3.1]{vi2},
the trace $\tr_{R\Gamma_{c}(V_{\tau},E_{\lambda})}(h')$ is an integer independent of $\ell\neq p$
and the extension $E_{\lambda}$ of $\mathbf{Q}_{\ell}$
for every $h'\in H'$.
By Lemma \ref{lembtr} and Proposition \ref{propvip}, we have
\begin{equation}
\frac{1}{[E:\mathbf{Q}]}\tr_{E/\mathbf{Q}}\btr_{a}(p_{K}(h'))=
\frac{1}{[E':\mathbf{Q}]}\tr_{E'/\mathbf{Q}}\btr_{b}(p_{K}(h')) \notag
\end{equation}
for every $h'\in K'$ such that $p_{P'}(h')=g'$ and that $\tr_{R\Gamma_{c}(V_{\bar{\tau}},E_{\lambda})}(h')\neq 0$.
Since $[E:\mathbf{Q}]$, $[E':\mathbf{Q}]$, and $|H|$ are positive integers,
we have 
\begin{equation}
\frac{1}{[E:\mathbf{Q}]}\tr_{E/\mathbf{Q}}\btr_{f_{!}a}(g)=
\frac{1}{[E':\mathbf{Q}]}\tr_{E'/\mathbf{Q}}\btr_{f_{!}b}(g). \notag
\end{equation}
Hence the assertion follows by Lemma \ref{lembtr} 
and Lemma \ref{lemdimcn}.

\section{Compatibility with functors}
\label{scor}

Let $S$ be an excellent trait of residue characteristic $p$.
Let $\Lambda$ and $\Lambda'$ be finite fields of characteristic $\ell\neq p$ and $\ell'\neq p$ respectively.
We prove two corollaries of Theorem \ref{thmmain}.

For an $S$-morphism $f\colon X\rightarrow Y$ of separated schemes over $S$ of finite type, 
let $f_{*}\colon K_{c}(X,\Lambda)\rightarrow K_{c}(Y,\Lambda)$ and 
$f^{!}\colon K_{c}(Y,\Lambda)\rightarrow K_{c}(X,\Lambda)$ 
denote the morphisms induced by
the functors $Rf_{*}$ and $Rf^{!}$ respectively.
Let $D_{X}\colon K_{c}(X,\Lambda)\rightarrow K_{c}(X,\Lambda)$ be the morphism
induced by the dualizing functor (\cite[4.2]{sga4h}). 

\begin{cor}
\label{corswr}
Let $f\colon X\rightarrow Y$ be an $S$-morphism 
of separated schemes over $S$ of finite type.
\begin{enumerate}
\item $f^{*}\times f^{*}\colon K_{c}(Y,\Lambda)\times K_{c}(Y,\Lambda')\rightarrow
K_{c}(X,\Lambda)\times K_{c}(X,\Lambda')$ induces $f^{*}\times f^{*}\colon
\Delta_{c}(Y,\Lambda,\Lambda')\rightarrow \Delta_{c}(X,\Lambda,\Lambda')$.
\item $f_{*}\times f_{*}\colon K_{c}(X,\Lambda)\times K_{c}(X,\Lambda')\rightarrow
K_{c}(Y,\Lambda)\times K_{c}(Y,\Lambda')$ induces $f_{*}\times f_{*}\colon
\Delta_{c}(X,\Lambda,\Lambda')\rightarrow \Delta_{c}(Y,\Lambda,\Lambda')$.
\item $f_{!}\times f_{!}\colon K_{c}(X,\Lambda)\times K_{c}(X,\Lambda')\rightarrow
K_{c}(Y,\Lambda)\times K_{c}(Y,\Lambda')$ induces $f_{!}\times f_{!}\colon
\Delta_{c}(X,\Lambda,\Lambda')\rightarrow \Delta_{c}(Y,\Lambda,\Lambda')$.
\item $f^{!}\times f^{!}\colon K_{c}(Y,\Lambda)\times K_{c}(Y,\Lambda')\rightarrow
K_{c}(X,\Lambda)\times K_{c}(X,\Lambda')$ induces $f^{!}\times f^{!}\colon
\Delta_{c}(Y,\Lambda,\Lambda')\rightarrow \Delta_{c}(X,\Lambda,\Lambda')$.
\item $D_{X}\times D_{X}\colon K_{c}(X,\Lambda)\times K_{c}(X,\Lambda')\rightarrow
K_{c}(X,\Lambda)\times K_{c}(X,\Lambda')$ induces $D_{X}\times D_{X}\colon
\Delta_{c}(X,\Lambda,\Lambda')\rightarrow \Delta_{c}(X,\Lambda,\Lambda')$.
\end{enumerate}
\end{cor}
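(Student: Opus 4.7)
The plan is to deduce Corollary \ref{corswr} from Theorem \ref{thmmain}, Proposition \ref{propfct}, and the standard duality formulas among Grothendieck's six operations. Part (i) is a restatement of Proposition \ref{propfct}(i). For (iii), I would first remove the strict-local hypothesis of Theorem \ref{thmmain} by descent along the strict henselization: letting $S'$ denote the strict henselization of $S$ at its closed point and setting $X'=X\times_SS'$, $Y'=Y\times_SS'$, $f'=f\times_S\id_{S'}$, the map $S'\to S$ is a cofiltered limit of étale neighborhoods of the same residue characteristic $p$, so it affects only the tame quotient of inertia groups. The $p$-Sylow subgroups entering Definition \ref{defezs} are therefore unchanged, and one checks accordingly that $(c,d)\in\Delta_c(Y,\Lambda,\Lambda')$ if and only if $(c|_{Y'},d|_{Y'})\in\Delta_c(Y',\Lambda,\Lambda')$. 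Combined with the proper base change identity $(f_!a)|_{Y'}=f'_!(a|_{X'})$ and Theorem \ref{thmmain} applied to $f'/S'$, this yields (iii).

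The core content is (v), which I would prove by induction on $\dim X$ together with (ii) and (iv), the case $\dim X=0$ being trivial. In the inductive step, given $(a,b)\in\Delta_c(X,\Lambda,\Lambda')$, Lemma \ref{lemeqcn}(ii) furnishes a stratification $X=U\amalg Z$ with $j:U\hookrightarrow X$ a dense open, $U$ normal connected and smooth over $S$, $(a|_U,b|_U)\in\Delta_{coh}(U,\Lambda,\Lambda')$, and $i:Z\hookrightarrow X$ the closed complement of strictly smaller dimension. Since restriction along an open immersion commutes with Verdier duality, $(D_Xa)|_U=D_U(a|_U)$, and on the smooth locus $U/S$ the dualizing complex is Zariski-locally a shift and Tate twist of $\Lambda_U$, so $D_U(a|_U)$ agrees in $K_{coh}(U,\Lambda)$ with a Tate twist of $(a|_U)^\vee$. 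Two elementary $\Lambda[G]$-module identities for $g\in G$ of $p$-power order then suffice: Maschke's theorem (applicable since $\ell\neq p$) gives $\dim M^g=\dim(M^\vee)^g$, and wild inertia acts trivially on every Tate twist, since the cyclotomic character is tame, so $\dim(M\otimes\Lambda(d))^g=\dim M^g$. Hence $((D_Xa)|_U,(D_Xb)|_U)\in\Delta_{coh}(U,\Lambda,\Lambda')$. For the closed stratum one applies the identity $i^*D_X=D_Zi^!$ to reduce the verification to showing $(i^!a,i^!b)\in\Delta_c(Z,\Lambda,\Lambda')$, and concludes by the inductive hypothesis on $Z$.

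Items (ii) and (iv) follow at the level of $K_c$ from the canonical isomorphisms $Rf_*=D_Y\circ Rf_!\circ D_X$ and $Rf^!=D_X\circ Rf^*\circ D_Y$, combining (iii)+(v) and (i)+(v) respectively. The main obstacle will be organizing the simultaneous induction on $\dim X$ for (ii), (iv), and (v) without circularity, since the computation of $D_X$ on the closed stratum invokes $i^!$, i.e.\ (iv). This is managed by invoking the relative purity isomorphism $i^!\mathcal{F}\cong i^*\mathcal{F}(-c)[-2c]$, valid when $\mathcal{F}$ is lisse and $i:Z\hookrightarrow X$ is a regular closed immersion of codimension $c$ between smooth schemes, so that after sufficient refinement of the stratification, $i^!$ acting on lisse sheaves reduces to $i^*$ up to a Tate twist and shift—both harmless for wild ramification—thereby grounding the induction on strata of strictly lower dimension.
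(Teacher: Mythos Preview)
Your argument for (i) and (iii) is fine and matches the paper's. The real problem is the circularity you flag in the induction for (v): it is not resolved by relative purity, and this is a genuine gap.

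Concretely, in your inductive step on $\dim X$, to handle the closed stratum you need $(i^{!}a,i^{!}b)\in\Delta_{c}(Z,\Lambda,\Lambda')$. In $K$-theory one has $i^{!}=i^{*}-i^{*}\circ Rj_{*}\circ j^{*}$, so this reduces to showing $(Rj_{*}(a|_{U}),Rj_{*}(b|_{U}))\in\Delta_{c}(X,\Lambda,\Lambda')$, i.e.\ exactly (ii) for the open immersion $j\colon U\hookrightarrow X$. In your scheme, (ii) is deduced from (v) via $Rf_{*}=D_{Y}\circ Rf_{!}\circ D_{X}$, so you would need (v) on $X$ itself---which is what you are proving. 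Relative purity does not break this loop: the isomorphism $i^{!}\mathcal{F}\cong i^{*}\mathcal{F}(-c)[-2c]$ requires $\mathcal{F}$ lisse on a neighborhood of $Z$ in $X$, whereas $a$ is only lisse on the stratum $U$, and $j_{!}(a|_{U})$ is never lisse across $Z$. Equivalently, writing $D_{X}a=Rj_{*}D_{U}(a|_{U})+i_{*}D_{Z}(a|_{Z})$ by d\'evissage, the term $i^{*}Rj_{*}D_{U}(a|_{U})$ again forces (ii) for $j$. No refinement of the stratification removes this, because at every stage the open stratum has the same dimension as $X$.

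The paper avoids the circularity by a different logical order: it first proves (ii) and (iii) simultaneously, \emph{without} any appeal to duality, by invoking Laumon's theorem \cite[Th\'eor\`eme 1.1]{la} that $f_{!}=f_{*}$ as maps on Grothendieck groups when $S$ is strict local. With (ii) in hand, (iv) follows by factoring $f$ as a closed immersion followed by a smooth morphism and using $f^{!}=f^{*}-f^{*}\circ j_{*}\circ j^{*}$ (closed case) and $Rf^{!}\cong f^{*}(d)[2d]$ (smooth case). Finally (v) is deduced from (iv) applied to the structure morphism $h\colon X\to S$: one reduces to $(a,b)\in\Delta_{coh}$, notes $(h^{!}\Lambda,h^{!}\Lambda')\in\Delta_{coh}(X,\Lambda,\Lambda')$ by (iv), and checks that $\btr_{h^{!}\Lambda}(g)$ is an integer for $g\in E_{X/S}$, so that the multiplicativity of Brauer traces together with Lemmas \ref{lembtr} and \ref{lemdimcn} concludes. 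If you insert Laumon's theorem to prove (ii) independently, your inductive scheme for (v) can then be replaced by this direct argument (or your induction can be completed, since (iv) for $i$ is now available).

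A smaller point: your open stratum $U$ need not be smooth \emph{over $S$}---for instance if $X\subset X_{s}$ there is no dense open flat over $S$. One should say $U$ is smooth over the image point $\eta$ or $s$ (after splitting $X=X_{\eta}\amalg X_{s}$), which is harmless since the residue field is perfect over the strict localization.
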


\begin{proof}
We have already proved (i) in Proposition \ref{propfct} (i).

We prove (ii) and (iii).
By \cite[Th\'{e}or\`{e}me 1.1]{la}, if $S$ is strict local then we have $f_{!}=f_{*}$
as a morphism of Grothendieck groups.
Let $(a,b)$ be an element of $\Delta_{c}(X,\Lambda,\Lambda')$ and 
let $\pi\colon S^{sh}\rightarrow S$ be the strict localization.
We use the same notation $\pi$ for the base change $X_{S^{sh}}\rightarrow X$ 
of $X$ by $\pi\colon S^{sh}\rightarrow S$.
Since $(\pi^{*}a,\pi^{*}b)\in \Delta_{c}(X_{S^{sh}},\Lambda,\Lambda')$
and $\pi^{*}\circ f_{!}=f_{!}\circ \pi^{*}$,
we have $(\pi^{*}f_{!}a,\pi^{*}f_{!}b)\in \Delta_{c}(Y_{S^{sh}},\Lambda,\Lambda')$
by Theorem \ref{thmmain}.
Since $\pi^{*}\circ f_{*}=f_{*}\circ\pi^{*}=f_{!}\circ\pi^{*}$,
we have $(\pi^{*}f_{*}a,\pi^{*}f_{*}b)\in \Delta_{c}(Y_{S^{sh}},\Lambda,\Lambda')$.
Since the assertion is \'{e}tale local, the assertions (ii) and (iii) follow.

We prove (iv).
Since the assertion is local, we may assume that $X=\Spec A$ and $Y=\Spec B$ are affine.
We identify $A$ with $B[t_{1},\ldots,t_{d}]/I$ for some $d\in \mathbf{Z}_{\ge 0}$ and an ideal $I$ of
$B[t_{1},\ldots,t_{n}]$. 
Then the morphism $h$ is factorized to the composition of a closed immersion
$X\rightarrow \mathbf{A}_{Y}^{d}$ and the projection 
$\mathbf{A}_{Y}^{d}\rightarrow Y$.
Hence we may assume that $f$ is a closed immersion or a smooth morphism of relative dimension $d$.
Suppose that $f$ is a closed immersion and 
let $j\colon Y\setminus X\rightarrow Y$ denote the open immersion.
Since we have $f^{!}=f^{*}-f^{*}\circ j_{*} \circ j^{*}$, the assertion follows by (i) and (ii).
Suppose that $f$ is a smooth morphism of relative dimension $d$.
Since we have an isomorphism $t_{f}\colon f^{*}(d)[2d]\rightarrow Rf^{!}$ (\cite[3.2]{sga4}), the assertion follows by (i).

We prove (v).
Let $h\colon X\rightarrow S$ be the structure morphism.
Let $(a,b)$ be an element of $\Delta_{c}(X,\Lambda,\Lambda')$.
By devissage, we may assume that $(a,b)\in \Delta_{coh}(X,\Lambda,\Lambda')$
and that $(h^{!}\Lambda,h^{!}\Lambda')\in K_{coh}(S,\Lambda)\times K_{coh}(S,\Lambda')$.
Since $(\Lambda,\Lambda')\in \Delta_{coh}(X,\Lambda,\Lambda')$, 
we have $(h^{!}\Lambda,h^{!}\Lambda')\in \Delta_{coh}(X,\Lambda,\Lambda')$ by (iv).
Since $\btr_{D_{X}(a)}(g)$ for $g\in E_{X/S}$ is the product of $\btr_{h^{!}\Lambda}(g)$
and the conjugate of $\btr_{a}(g)$ and similarly for $\btr_{D_{X}(b)}(g)$ for $g\in E_{X/S}$,
it is sufficient to prove that $\btr_{h^{!}\Lambda}(g)$ and $\btr_{h^{!}\Lambda'}(g)$ are integers for
every $g\in E_{X/S}$.

As in the proof of (iv), we may assume that $h$ is a closed immersion or a smooth morphism of
relative dimension $d$ for some $d\in \mathbf{Z}_{\ge 0}$.
Suppose that $h$ is a closed immersion.
Let $j\colon S\setminus X\rightarrow S$ be the open immersion.
Since $h^{!}=h^{*}-h^{*}\circ j_{*}\circ j^{*}$, the assertion follows by
applying \cite[Exercise 18.2]{se} to $j_{*}\Lambda$ and $j_{*}\Lambda'$.
Suppose that $h$ is a smooth morphism of relative dimension $d$.
Since we have the isomorphism $t_{h}\colon h^{*}(d)[2d]\rightarrow Rh^{!}$,
the assertion follows.
\end{proof}

\begin{cor}[cf.\ {\cite[Corollaire 0.2]{vi2}}]
\label{corkzt}
Let $f\colon X\rightarrow Y$ be an $S$-morphism 
of separated schemes over $S$ of finite type.
\begin{enumerate}
\item $f^{*}\colon K_{c}(Y,\Lambda)\rightarrow K_{c}(X,\Lambda)$
induces $f^{*}\colon K_{c}(Y,\Lambda)_{0}\rightarrow K_{c}(X,\Lambda)_{0}$.
\item $f_{*}\colon K_{c}(X,\Lambda)\rightarrow K_{c}(Y,\Lambda)$
induces $f_{*}\colon K_{c}(X,\Lambda)_{0}\rightarrow K_{c}(Y,\Lambda)_{0}$.
\item $f_{!}\colon K_{c}(X,\Lambda)\rightarrow K_{c}(Y,\Lambda)$
induces $f_{!}\colon K_{c}(X,\Lambda)_{0}\rightarrow K_{c}(Y,\Lambda)_{0}$.
\item $f^{!}\colon K_{c}(Y,\Lambda)\rightarrow K_{c}(X,\Lambda)$
induces $f^{!}\colon K_{c}(Y,\Lambda)_{0}\rightarrow K_{c}(X,\Lambda)_{0}$.
\item $D_{X}\colon K_{c}(X,\Lambda)\rightarrow K_{c}(X,\Lambda)$
induces $D_{X}\colon K_{c}(X,\Lambda)_{0}\rightarrow K_{c}(X,\Lambda)_{0}$.
\end{enumerate}
\end{cor}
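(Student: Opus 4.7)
The plan is to deduce Corollary \ref{corkzt} immediately from Corollary \ref{corswr} by exploiting the way $K_{c}(X,\Lambda)_{0}$ sits inside $\Delta_{c}(X,\Lambda,\Lambda)$. By Definition \ref{defkczt}, an element $a\in K_{c}(X,\Lambda)$ lies in $K_{c}(X,\Lambda)_{0}$ if and only if $(a,0)$ lies in $\Delta_{c}(X,\Lambda,\Lambda)$ (taking $\Lambda'=\Lambda$). In other words, the first projection induces an isomorphism
\begin{equation}
\Delta_{c}(X,\Lambda,\Lambda)\cap\bigl(K_{c}(X,\Lambda)\times\{0\}\bigr)\xrightarrow{\sim} K_{c}(X,\Lambda)_{0}, \notag
\end{equation}
and similarly for $Y$. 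This is precisely the reduction that was already used in the unlabeled corollary of Section \ref{sswr} to deduce $f^{*}$ and the quasi-finite case of $f_{!}$ from Proposition \ref{propfct}.

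Now for each of the five operations $F\in\{f^{*},f_{*},f_{!},f^{!},D_{X}\}$, I would observe that $F$ sends the zero class to the zero class (this is immediate for each functor, since $Rf_{!}$, $Rf_{*}$, $f^{*}$, $Rf^{!}$, and the dualizing functor are additive and preserve $0$). Therefore, if $a\in K_{c}(X,\Lambda)_{0}$, the pair $(a,0)\in\Delta_{c}(X,\Lambda,\Lambda)$, and the corresponding item of Corollary \ref{corswr} sends it to $(F(a),F(0))=(F(a),0)$, which lies in the target $\Delta_{c}(-,\Lambda,\Lambda)$. Applying the first projection on the target yields $F(a)\in K_{c}(-,\Lambda)_{0}$, which is exactly the assertion of the corresponding item of Corollary \ref{corkzt}.

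In more detail: statement (i) follows from Corollary \ref{corswr} (i); statement (ii) from (ii); statement (iii) from (iii); statement (iv) from (iv); statement (v) from (v). There is no genuine obstacle here, since all the hard work---namely, that $f^{*}\times f^{*}$, $f_{*}\times f_{*}$, $f_{!}\times f_{!}$, $f^{!}\times f^{!}$, and $D_{X}\times D_{X}$ preserve the subgroup $\Delta_{c}(-,\Lambda,\Lambda)$---is already packaged in Corollary \ref{corswr}. The only thing to check is that $F(0)=0$ for each $F$, which is trivial. Thus the proof is essentially a one-line deduction from Corollary \ref{corswr} applied with $\Lambda'=\Lambda$, combined with the identification of $K_{c}(-,\Lambda)_{0}$ as the fiber over $0$ in the second coordinate of $\Delta_{c}(-,\Lambda,\Lambda)$.
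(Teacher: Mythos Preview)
Your proposal is correct and follows essentially the same approach as the paper: the paper also identifies $K_{c}(X,\Lambda)_{0}$ with $\Delta_{c}(X,\Lambda,\Lambda)\cap(K_{c}(X,\Lambda)\times\{0\})$ via the first projection and then invokes Corollary \ref{corswr} (i)--(v) directly.
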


\begin{proof}
Since $\Delta_{c}(X,\Lambda,\Lambda)\cap (K_{c}(X,\Lambda)\times \{0\})$ and
$\Delta_{c}(Y,\Lambda,\Lambda)\cap (K_{c}(Y,\Lambda)\times \{0\})$ are isomorphic to
$K_{c}(X,\Lambda)_{0}$ and $K_{c}(Y,\Lambda)_{0}$ respectively by the first projections,
the assertions (i)--(v) follows from Corollary \ref{corswr} (i)--(v) respectively.
\end{proof}

Vidal's subgroup $K_{c}(X,\Lambda)^{0}_{t}$ of $K_{c}(X,\Lambda)$ consists 
of the elements $a\in K_{c}(X,\Lambda)$
such that there exists a finite decomposition $X=\coprod_{i}X_{i}$ of $X$
into normal connected locally closed subschemes $\{X_{i}\}_{i}$ of $X$
such that $a|_{X_{i}}\in K_{coh}(X_{i},\Lambda)$ for every $i$ and that
$\btr_{a|_{X_{i}}}(g)=0$ for every $i$ and $g\in E_{X_{i}/S}$ 
(\cite[D\'{e}finition 2.3.1]{vi1}).
For Vidal's subgroup $K_{c}(X,\Lambda)^{0}_{t}$, the same assertions in Corollary \ref{corkzt}
hold and further
the compatibility with the derived tensor product and $R\mhom$ hold
by \cite[Corollaire 0.2]{vi2}.
Namely  
$K_{c}(X,\Lambda)^{0}_{t}$ is an ideal of $K_{c}(X,\Lambda)$ with respect to the multiplication induced by the derived tensor product and 
$R\mathcal{H}om\colon K_{c}(X,\Lambda)\times K_{c}(X,\Lambda)\rightarrow 
K_{c}(X,\Lambda)$ induces $R\mathcal{H}om\colon K_{c}(X,\Lambda)^{0}_{t}\times 
K_{c}(X,\Lambda)\rightarrow K_{c}(X,\Lambda)^{0}_{t}$ and 
$R\mathcal{H}om\colon K_{c}(X,\Lambda)\times K_{c}(X,\Lambda)^{0}_{t}\rightarrow 
K_{c}(X,\Lambda)^{0}_{t}$.
However, the compatibility with the derived tensor product or $R\mhom$
does not hold for $K_{c}(X,\Lambda)_{0}$ in general.

\begin{exa}
Let $G$ be a finite group $\mathbf{F}_{p}$.
Let $M$ and $N$ be $1$-dimensional representations of $G$ over $\Lambda$.
Let $m$ and $n$ be bases of $M$ and $N$ respectively.
Assume that $\Lambda$ has a $p$-th root of unity $\zeta_{p}$ not equal to $1$
and that $\zeta_{p}^{-1}\neq \zeta_{p}$.
Further assume that the action of $1\in \mathbf{F}_{p}$ on $M$ is given by $1\cdot m=\zeta_{p}
\cdot m$ and the action of $1\in \mathbf{F}_{p}$ on $N$ is given by $1\cdot n=\zeta^{-1}_{p}\cdot n$.
Then we have $\dim M^{g}=\dim N^{g}$ for every $g\in G$.
However, we have $\dim (M\otimes_{\Lambda}M)^{1}=0$
and $\dim (N\otimes_{\Lambda}M)^{1}=1$.
\end{exa}


\end{document}